%
\documentclass[12pt, reqno]{amsart}
\usepackage{amsmath, amsthm, amscd, amsfonts, amssymb, graphicx, color}
\usepackage[bookmarksnumbered, colorlinks, plainpages]{hyperref}

\textheight 22.5truecm \textwidth 14.5truecm
\setlength{\oddsidemargin}{0.35in}\setlength{\evensidemargin}{0.35in}

\setlength{\topmargin}{-.5cm}

\newtheorem{theorem}{Theorem}[section]
\newtheorem{lemma}[theorem]{Lemma}

\newtheorem{corollary}[theorem]{Corollary}
\theoremstyle{definition}

\theoremstyle{remark}

\numberwithin{equation}{section}
\begin{document}
\setcounter{page}{1}

\title[Differences of weighted differentiation composition operators]{Differences of weighted differentiation composition operators from Bloch-type space to weighted-type space}

\author[Y.X. Liang ]{Yu-Xia Liang}

\address{$^{1}$ School of Mathematical Sciences, Tianjin Normal University, Tianjin 300387, P.R. China.}
\email{\textcolor[rgb]{0.00,0.00,0.84}{liangyx1986@126.com}}



\subjclass[2010]{Primary  47B38; Secondary  30H30, 47B33.}

 \keywords{differences, differentiation, composition operator, Bloch-type space.}
 \thanks{This work was supported  by  the Doctoral Fund of Tianjin Normal University (Grant Nos. 52XB1514). }


\begin{abstract}
 We found several new equivalent characterizations for the boundedness of the differences of weighted differentiation composition operators from Bloch-type space to weighted-type space. Especially, we estimated its essential norm in terms of the $n$-th power of the induced analytic self-maps on the unit disk, which can provide a new and simple compactness criterion.
\end{abstract} \maketitle

\section{Introduction and preliminaries}
Denote $\mathbb{N}_0$ the set of all nonnegative integers. In the sequel, the notations $A\approx B,\;A\preceq B,\;A\succeq B$ mean that there maybe different  positive constants $C$ such that $B/C \leq A \leq CB,\; A \leq CB,\;  C B\leq A.$ Let $H(\mathbb{D})$ be the space of all holomorphic functions on $\mathbb{D}$ and $S(\mathbb{D})$ the collection of all holomorphic self-maps on $\mathbb{D},$ where $\mathbb{D}$ is the unit disk in the complex plane $\mathbb{C}.$  Given a continuous linear operator $T$ on a Banach space $X$,  its essential norm  is the distance from the operator $T$ to compact operators on $X$, that is, $\|T\|_{e}=\inf\{\|T-K\|: K\; \mbox{is compact}\}.$ It's trivial that  $\|T\|_{e}=0$ if and only if $T$ is compact, see, e.g. \cite{GM} and their references therein.

For $a\in \mathbb{D}$, let $\varphi_a$ be the automorphism of $\mathbb{D}$ exchanging $0$ for $a,$ that is, $\varphi_a(z)=(a-z)/(1-\bar{a}z).$ For $z,w\in \mathbb{D},$ the pseudo-hyperbolic distance between $z$ and $w$ is given by $$\rho(z,w)=|\varphi_w(z)|=\left|\frac{z-w}{1-\bar{w}z}\right|.$$ Immediately, given $\varphi_1,\varphi_2\in S(\mathbb{D}),$ we denote $\rho(z)=\rho(\varphi_1(z),\varphi_2(z))$ for simplicity.

  For $0<\alpha<\infty,$ an $f \in H(\mathbb{D})$ is said to be
in the Bloch-type space $\mathcal{B}^\alpha,$ or $\alpha-$Bloch space, if
\begin{eqnarray*}\|f\|_{\mathcal{B}^\alpha}=|f(0)|+\sup\limits_{z\in \mathbb{D}} (1-|z|^2)^\alpha |f'(z)|<\infty. \end{eqnarray*} As we all know,  $\mathcal{B}^\alpha$ is a Banach space endowed with the norm $\|f\|_{\mathcal{B}^\alpha},$ and the little Bloch-type space $\mathcal{B}_0^\alpha$ is the closure of polynomials in $\mathcal{B}^\alpha$, see,e.g. \cite{FZ,HL,MZ,MZ1,Zhu1}.  In particular, $\mathcal{B}^\alpha=\mathcal{B},$ the classical Bloch space for $\alpha=1$; if $0<\alpha<1,$ $\mathcal{B}^\alpha=Lip_{1-\alpha},$ the analytic Lipschitz space which consists of all $f\in H(\mathbb{D})$ satisfying
 \begin{eqnarray*} |f(z)-f(w)| \leq C|z-w|^{1-\alpha},\end{eqnarray*} for some constant $C>0$ and all $z,w\in \mathbb{D};$
 when $\alpha>1,$ $\mathcal{B}^{\alpha}=H_{\alpha-1}^\infty,$ the $\alpha-1$ weighted-type space of analytic functions that contains all $f\in H(\mathbb{D})$ satisfying
 \begin{eqnarray*}\sup\limits_{z\in \mathbb{D}}(1-|z|^2)^{\alpha-1}|f(z)|<\infty. \end{eqnarray*}
 More generally, let $v$ be a strictly positive continuous and bounded function (weight) on $\mathbb{D}$. The weighted-type space $H_v^\infty$ is defined to be the collection of all functions $f\in H(\mathbb{D})$ that satisfy $$\|f\|_v=\sup\limits_{z\in \mathbb{D}} v(z)|f(z)|<\infty,$$  provided we identify that differ by a constant, and then $H_v^\infty$ is a Banach space under the norm $\|.\|_v$, see, e.g. \cite{DO,HKLRS} and the references therein.

Given $\varphi\in S(\mathbb{D})$ and $u\in H(\mathbb{D})$, the weighted
composition operator $uC_{\varphi}$  is defined by $$uC_{\varphi}(f) =u
\cdot(f\circ \varphi)\;\;\mbox{for $f\in H(\mathbb{D}).$ }$$ As for $u\equiv 1,$ the
weighted composition operator is the usual
composition operator, denote by $C_\varphi$, see \cite{CM}.  When $\varphi=id$   the
identity map, the operator $uC_{id}$ is called
multiplication operator $M_u$. Let $D=D^1$ be the differentiation  operator, i.e., $Df=f'$ for $f\in H(\mathbb{D}).$ More generally, given an integer $m\in \mathbb{N}_0,$ we can further define the operator $D^mf=f^{(m)}$ for $f\in H(\mathbb{D}).$ Now, the weighted differentiation composition operator, denoted by $D_{\varphi,u}^m,$ is given as $$(D_{\varphi,u}^mf)(z)=u(z)f^{(m)}(\varphi(z)),\;\mbox{for $f\in H(\mathbb{D}).$ }$$ In fact, the operator $D_{\varphi,u}^m$ can degenerate to many classical operators, such as $D_{\varphi,id}^0=C_\varphi$ with $u=id$ and $m=0$; $D_{\varphi,u}^{0}=uC_\varphi$ with $m=0$; $D_{\varphi,id}^1=C_\varphi D$ with $u=id$ and $m=1$, and $D_{\varphi,u}^1=uC_\varphi D$ with $m=1$.

In 2009, interest has arisen to characterize the properties of composition operator $C_\varphi$  on Bloch-type spaces in terms of the $n$-th power of the analytic self-map $\varphi$ of the open unit disk $\mathbb{D}$. More clearly,  Wulan, Zheng and Zhu \cite{WZZ} obtained a new result about the
compactness of the composition operator on the Bloch space. It's said that $C_\varphi$ is compact on the Bloch space $\mathcal{B}$ if and only if  $\lim\limits_{n\rightarrow \infty} \|\varphi ^n\|_{\mathcal{B}}=0,$ where $\varphi^n$ means the $n$-th power of $\varphi.$ As regards to Bloch-type spaces, Zhao \cite{Zhao} obtained that  $\|C_\varphi\|_{e,\mathcal{B}^\alpha\rightarrow \mathcal{B}^\beta}\approx \limsup\limits_{n\rightarrow \infty} n^{\alpha-1}\|\varphi^n\|_{\beta}$ for $0<\alpha,\beta<\infty.$   As far as we know that the composition operator is a typical bounded operator on the classical Bloch space $\mathcal{B},$ while the differentiation operators are typically unbounded on many Banach spaces of holomorphic functions. Especially, giving the new equivalent characterizations for the boundedness and compactness of  weighted differentiation composition operator $D_{\varphi,u}^m$ are interesting thing, which can unify many classical operators as above. There has been some work on composition and differentiation operators between holomorphic spaces,  and the interested readers can refer to \cite{LZ2,LZ1,LZ3, St1,WW} and their references therein   on much of the developments in the theory of new characterizations. As far as we know, there has been no new similar descriptions for differences of operators. Hence the characterizations for differences of classical operators  by  the $n$-th power of the induced analytic self-maps are in desired need of response. In this paper, we will try our best to characterize the boundedness and compactness of the operator $D_{\varphi_1,u_1}^m-D_{\varphi_2,u_2}:\;\mathcal{B}_\alpha\rightarrow H_v^\infty$. The paper is organized as follows: we found several characterizations for the boundedness of $ D_{\varphi_1,u_1}^m-D_{\varphi_2,u_2}^m:  \mathcal{B}^\alpha\rightarrow H_v^\infty$ in section 2; and then the compactness of $ D_{\varphi_1,u_1}^m-D_{\varphi_2,u_2}^m:  \mathcal{B}^\alpha\rightarrow H_v^\infty$ was considered in section 3; finally, some corollaries were presented in section 4.

\section{ The boundedness of $ D_{\varphi_1,u_1}^m-D_{\varphi_2,u_2}^m:  \mathcal{B}^\alpha\rightarrow H_v^\infty$}

In this section, we will give several equivalent characterizations for the boundedness of $ D_{\varphi_1,u_1}^m-D_{\varphi_2,u_2}^m:  \mathcal{B}^\alpha\rightarrow H_v^\infty$. For $a\in \mathbb{D},$ we define the following two families test functions:
\begin{eqnarray}f_a(z)=\int_0^z\int_0^{t_m}\cdots \int_0^{t_2} \frac{(1-|a|^2)^\alpha}{(1-\bar{a}t_1)^{2\alpha+m-1}}dt_1dt_2\cdots dt_m,\label{f_a} \end{eqnarray}
\begin{eqnarray} g_a(z)=\int_0^z\int_0^{t_m}\cdots \int_0^{t_2} \frac{(1-|a|^2)^\alpha}{(1-\bar{a}t_1)^{2\alpha+m-1}}\cdot \frac{a-t_1}{1-\bar{a}t_1} dt_1dt_2\cdots dt_m.\label{g_a}\end{eqnarray}

Due to the fact $f\in \mathcal{B}^\alpha$ if and only if $\|f\|_{\mathcal{B}^\alpha}\approx \sup\limits_{z\in \mathbb{D}}(1-|z|^2)^{\alpha+m-1}|f^{(m)}(z)|<\infty.$ It's obvious that the following equality holds \begin{eqnarray*}&&\|g_a\|_{\mathcal{B}^\alpha}\preceq \|f_a\|_{\mathcal{B}^\alpha}\approx \sup\limits_{z\in \mathbb{D}}(1-|z|^2)^{\alpha+m-1}|f_a^{(m)}(z)|\nonumber\\&&=\sup\limits_{z\in \mathbb{D}}(1-|z|^2)^{\alpha+m-1} \frac{(1-|a|^2)^\alpha}{|1-\bar{a}z|^{2\alpha+m-1}}<\infty.\end{eqnarray*}Moreover, by the direct computations,  it yields that
\begin{eqnarray} f_a^{(m)}(z)= \frac{(1-|a|^2)^\alpha}{(1-\bar{a}z)^{2\alpha+m-1}} \;\; \mbox{and}\;\; g_a^{(m)}(z)=\frac{(1-|a|^2)^\alpha}{(1-\bar{a}z)^{2\alpha+m-1}} \cdot \frac{a-z}{1-\bar{a}{z}}.\label{fgam}\end{eqnarray}
For our further use, we denote two notations \begin{eqnarray*} \mathcal{T}_{\alpha+m-1}^{\varphi_1} (v u_1)(z)=\frac{v(z)u_1(z)}{(1-|\varphi_1(z)|^2)^{\alpha+m-1}},\;  \mathcal{T}_{\alpha+m-1}^{\varphi_2} (v u_2)(z)=\frac{v(z)u_2(z)}{(1-|\varphi_2(z)|^2)^{\alpha+m-1}}; \end{eqnarray*}

In order to estimate the differences, we prove an estimate for $  | (1-|z|^2)^{\alpha+m-1}f^{(m)}(z)-(1-|w|^2)^{\alpha+m-1} f^{(m)}(w)|$ for $f\in \mathcal{B}^\alpha$ and $z,w\in \mathbb{D}.$
\begin{lemma}\label{lem di} Let $0<\alpha<\infty$. Then for each $f\in \mathcal{B}^\alpha,$ it holds that
$$  | (1-|z|^2)^{\alpha+m-1}f^{(m)}(z)-(1-|w|^2)^{\alpha+m-1} f^{(m)}(w)|\leq C \|f \|_{\mathcal{B}^\alpha}\rho(z,w)$$  for all $z,w \in \mathbb{D}.$ \end{lemma}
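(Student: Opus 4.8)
The plan is to reduce the statement to a pointwise estimate on derivatives of $f$ and then invoke a standard integration argument along a curve joining $z$ and $w$ in the hyperbolic metric. First I would recall that for $f\in\mathcal{B}^\alpha$ one has the familiar higher-order characterization $\sup_{z}(1-|z|^2)^{\alpha+m-1}|f^{(m)}(z)|\preceq\|f\|_{\mathcal{B}^\alpha}$, so in particular the function $h(z):=(1-|z|^2)^{\alpha+m-1}f^{(m)}(z)$ is bounded on $\mathbb{D}$ with $\|h\|_\infty\preceq\|f\|_{\mathcal{B}^\alpha}$. The quantity we must control is $|h(z)-h(w)|$, and the natural way to get the factor $\rho(z,w)=|\varphi_w(z)|$ is to show that $h$ is, up to the constant $\|f\|_{\mathcal{B}^\alpha}$, Lipschitz with respect to the pseudo-hyperbolic (equivalently, Bergman) metric on $\mathbb{D}$.

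The key steps, in order, would be: (1) Differentiate $h$ and estimate $|\nabla h(z)|$ (treating $z,\bar z$ formally, or just bounding $|h'(z)|$ where $h$ is viewed through its dependence on $z$ while $|z|^2$ is handled separately). One gets a term involving $(1-|z|^2)^{\alpha+m-1}|f^{(m+1)}(z)|$ and a term involving $(1-|z|^2)^{\alpha+m-2}|z||f^{(m)}(z)|$. The first is $\preceq\|f\|_{\mathcal{B}^\alpha}/(1-|z|^2)$ because $f^{(m+1)}$ satisfies the $\mathcal{B}^\alpha$-type bound with exponent $\alpha+m$; the second is $\preceq\|f\|_{\mathcal{B}^\alpha}/(1-|z|^2)$ directly from the exponent-$(\alpha+m-1)$ bound. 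Hence $(1-|z|^2)\,|\nabla h(z)|\preceq\|f\|_{\mathcal{B}^\alpha}$, i.e. $h$ is Lipschitz in the hyperbolic metric $\beta$ with constant $\preceq\|f\|_{\mathcal{B}^\alpha}$. (2) Integrate along the hyperbolic geodesic from $w$ to $z$ to conclude $|h(z)-h(w)|\preceq\|f\|_{\mathcal{B}^\alpha}\,\beta(z,w)$. (3) Convert $\beta(z,w)$ to $\rho(z,w)$: since $\beta(z,w)=\tfrac12\log\frac{1+\rho(z,w)}{1-\rho(z,w)}$, one has $\beta(z,w)\approx\rho(z,w)$ only when $\rho$ is bounded away from $1$, so for $\rho(z,w)$ near $1$ one instead uses the trivial bound $|h(z)-h(w)|\le 2\|h\|_\infty\preceq\|f\|_{\mathcal{B}^\alpha}\le C\|f\|_{\mathcal{B}^\alpha}\rho(z,w)$ (valid since $\rho(z,w)$ is then close to $1$). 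Splitting into the cases $\rho(z,w)\le\tfrac12$ and $\rho(z,w)>\tfrac12$ and combining gives the claim for all $z,w$.

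An alternative, perhaps cleaner, route avoids explicit differentiation: apply the already-known first-order Bloch estimate (the case $m=1$, or the analogous statement for $\mathcal{B}^{\alpha+m-1}$) to the function $f^{(m-1)}$, after noting $f\in\mathcal{B}^\alpha$ implies $f^{(m-1)}\in\mathcal{B}^{\alpha+m-1}$ with comparable norms; this is essentially the standard lemma that $(1-|z|^2)^\gamma g'(z)$ is hyperbolically Lipschitz for $g\in\mathcal{B}^\gamma$. Either way, I would present step (1) as the computational heart.

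The main obstacle I anticipate is bookkeeping in step (1): carefully handling the derivative of $(1-|z|^2)^{\alpha+m-1}$ (a nonholomorphic factor) and making sure the mixed term with the lower power $(1-|z|^2)^{\alpha+m-2}$ is absorbed correctly, together with getting the passage from the hyperbolic to the pseudo-hyperbolic metric right via the case split near $\rho=1$. None of this is deep, but the estimate must be organized so that every term is uniformly $\preceq\|f\|_{\mathcal{B}^\alpha}(1-|z|^2)^{-1}$; once that is in hand, the geodesic-integration and metric-comparison steps are routine.
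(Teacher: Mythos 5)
Your proposal is correct, and its first step coincides exactly with the paper's: reduce the statement to the fact that $f\in\mathcal{B}^\alpha$ implies $f^{(m)}\in H_{\alpha+m-1}^\infty$ with $\|f^{(m)}\|_{H_{\alpha+m-1}^\infty}\preceq\|f\|_{\mathcal{B}^\alpha}$, so that the claim becomes a pseudo-hyperbolic Lipschitz estimate for the bounded function $h(z)=(1-|z|^2)^{\alpha+m-1}f^{(m)}(z)$. The difference is in what happens next: the paper stops there and simply cites Lemma 3.2 of Dai--Ouyang (reference [DO]), which states precisely that for $g$ in a weighted-type space $H_\gamma^\infty$ one has $|(1-|z|^2)^\gamma g(z)-(1-|w|^2)^\gamma g(w)|\preceq\|g\|_{H_\gamma^\infty}\,\rho(z,w)$, whereas you prove that fact from scratch by the standard route --- bound the real gradient of $h$ by $\|f\|_{\mathcal{B}^\alpha}(1-|z|^2)^{-1}$ (using the derivative characterizations at orders $m$ and $m+1$), integrate along a hyperbolic geodesic to get the Bergman-metric Lipschitz bound, and then pass from $\beta(z,w)$ to $\rho(z,w)$ by splitting into $\rho\le\tfrac12$ (where $\beta\approx\rho$) and $\rho>\tfrac12$ (where the trivial sup-norm bound suffices). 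Your argument is essentially the proof of the cited lemma, so nothing is gained or lost mathematically; what you buy is self-containedness at the cost of length, and your bookkeeping (the $(1-|z|^2)^{\alpha+m-2}$ term being absorbed, the case split near $\rho=1$) is exactly right. One shared caveat, not a defect of your write-up relative to the paper: the equivalence $\|f\|_{\mathcal{B}^\alpha}\approx\sup_z(1-|z|^2)^{\alpha+m-1}|f^{(m)}(z)|$ that both you and the author invoke is the standard higher-order derivative characterization and is unproblematic for $m\ge 1$, but for $m=0$ it requires $\alpha>1$; the paper glosses over this as well.
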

\begin{proof}For $f\in \mathcal{B}^\alpha,$ it follows that $\sup\limits_{z\in \mathbb{D}}(1-|z|^2)^\alpha|f'(z)|\approx \sup\limits_{z\in \mathbb{D}}(1-|z|^2)^{\alpha+m-1}|f^{(m)}(z)|<\infty.$ That is to say $f^{(m)}\in H_{\alpha+m-1}^\infty,$ and moreover $\|f^{(m)}\|_{H_{\alpha+m-1}^\infty}\preceq \|f\|_{\mathcal{B}^\alpha}$.  By \cite[Lemma 3.2]{DO}, it yields that 

\begin{eqnarray*} && | (1-|z|^2)^{\alpha+m-1}f^{(m)}(z)-(1-|w|^2)^{\alpha+m-1} f^{(m)}(w)|\\&&\preceq \|f^{(m)}\|_{H_{\alpha+m-1}^\infty} \rho(z,w)\preceq \|f \|_{\mathcal{B}^\alpha}\rho(z,w).\end{eqnarray*} This ends the proof.
  \end{proof}
\begin{lemma}\label{lemma TFG} Let $m\in \mathbb{N}_0,$ $0<\alpha<\infty$ and $v$ be a weight. Suppose $u_1, u_2\in H(\mathbb{D}),$ $\varphi_1, \varphi_2 \in S(\mathbb{D})$. Then the following three inequalities hold,

\begin{eqnarray}&&(i)\;\;\sup\limits_{z\in \mathbb{D}}\left|\mathcal{T}_{\alpha+m-1}^{\varphi_1}(vu_1)(z)\right| \rho(z) \nonumber\\&& \leq\sup\limits_{a\in \mathbb{D}} \|(D_{\varphi_1,u_1}^m-D_{\varphi_2,u_2}^m) f_a \|_v +\sup\limits_{a\in \mathbb{D}}\|(D_{\varphi_1,u_1}^m-D_{\varphi_2,u_2}^m) g_a\|_v. \\&& (ii)\;\; \sup\limits_{z\in \mathbb{D}}\left|\mathcal{T}_{\alpha+m-1}^{\varphi_2}(vu_2)(z)\right| \rho(z) \nonumber \\&&\leq  \sup\limits_{a\in \mathbb{D}} \|(D_{\varphi_1,u_1}^m-D_{\varphi_2,u_2}^m) f_a\|_v +\sup\limits_{a\in \mathbb{D}}\|(D_{\varphi_1,u_1}^m-D_{\varphi_2,u_2}^m) g_a\|_v.\\&& (iii)\;\;\sup\limits_{z\in \mathbb{D}}\left|\mathcal{T}_{\alpha+m-1}^{\varphi_1}(vu_1)(z)-\mathcal{T}_{\alpha+m-1}^{\varphi_2}
(vu_2)(z)\right|\nonumber\\&&  \leq  \sup\limits_{a\in \mathbb{D}} \|(D_{\varphi_1,u_1}^m-D_{\varphi_2,u_2}^m) f_a\|_v +\sup\limits_{a\in \mathbb{D}}\|(D_{\varphi_1,u_1}^m-D_{\varphi_2,u_2}^m) g_a\|_v.  \end{eqnarray} That is, \begin{eqnarray*}&& \sup\limits_{z\in \mathbb{D}}\left|\mathcal{T}_{\alpha+m-1}^{\varphi_1}(vu_1)(z)\right|\rho(z)+\sup\limits_{z\in \mathbb{D}}\left|\mathcal{T}_{\alpha+m-1}^{\varphi_2}(vu_2)(z)\right| \rho(z)\\&&+\sup\limits_{z\in \mathbb{D}}\left|\mathcal{T}_{\alpha+m-1}^{\varphi_1}(vu_1)(z)-\mathcal{T}_{\alpha+m-1}^{\varphi_2}
(vu_2)(z)\right|\\&& \leq  \sup\limits_{a\in \mathbb{D}} \|(D_{\varphi_1,u_1}^m-D_{\varphi_2,u_2}^m) f_a\|_v +\sup\limits_{a\in \mathbb{D}}\|(D_{\varphi_1,u_1}^m-D_{\varphi_2,u_2}^m) g_a\|_v.\end{eqnarray*} \end{lemma}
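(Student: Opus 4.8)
The plan is to prove (i), (ii), (iii) in that order by one uniform device: fix $z\in\mathbb{D}$, write $\sigma_1=\varphi_1(z)$, $\sigma_2=\varphi_2(z)$, and apply $D_{\varphi_1,u_1}^m-D_{\varphi_2,u_2}^m$ to the test functions $f_a,g_a$ with the free parameter $a$ chosen equal to $\sigma_1$ or to $\sigma_2$. All the facts I need are immediate from \eqref{fgam} and the definition of $\varphi_a$: one has $g_a^{(m)}=f_a^{(m)}\cdot\varphi_a$, so $g_a^{(m)}(a)=0$ and $f_a^{(m)}(a)=(1-|a|^2)^{-(\alpha+m-1)}$; setting $w=\varphi_{\sigma_1}(\sigma_2)$ and $w'=\varphi_{\sigma_2}(\sigma_1)$ we have $|w|=|w'|=\rho(z)\le1$, $g_{\sigma_1}^{(m)}(\sigma_2)=w\,f_{\sigma_1}^{(m)}(\sigma_2)$ and $g_{\sigma_2}^{(m)}(\sigma_1)=w'\,f_{\sigma_2}^{(m)}(\sigma_1)$; and the families $\{f_a\}$, $\{g_a\}$ are uniformly bounded in $\mathcal{B}^\alpha$ (recorded just before \eqref{fgam}).

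For (i) I take $a=\sigma_1$. Substituting the above into the definition of the operator and forming the combination $w\cdot$(image of $f_{\sigma_1}$)$-$(image of $g_{\sigma_1}$), the cross term $u_2(z)f_{\sigma_1}^{(m)}(\sigma_2)$ cancels, leaving the exact identity
\[
w\bigl(D_{\varphi_1,u_1}^m-D_{\varphi_2,u_2}^m\bigr)f_{\sigma_1}(z)-\bigl(D_{\varphi_1,u_1}^m-D_{\varphi_2,u_2}^m\bigr)g_{\sigma_1}(z)=\frac{w\,u_1(z)}{(1-|\sigma_1|^2)^{\alpha+m-1}}.
\]
Multiplying by $v(z)$, using the triangle inequality, $|w|=\rho(z)\le1$, and the trivial bound $v(z)\bigl|(D_{\varphi_1,u_1}^m-D_{\varphi_2,u_2}^m)h_{\sigma_1}(z)\bigr|\le\sup_{a\in\mathbb{D}}\|(D_{\varphi_1,u_1}^m-D_{\varphi_2,u_2}^m)h_a\|_v$ for $h\in\{f,g\}$, the left side becomes $|\mathcal{T}_{\alpha+m-1}^{\varphi_1}(vu_1)(z)|\,\rho(z)$, dominated by the asserted right-hand side; taking $\sup_{z\in\mathbb{D}}$ gives (i). Inequality (ii) is identical after interchanging the two symbols: with $a=\sigma_2$ the same computation yields $w'(D_{\varphi_1,u_1}^m-D_{\varphi_2,u_2}^m)f_{\sigma_2}(z)-(D_{\varphi_1,u_1}^m-D_{\varphi_2,u_2}^m)g_{\sigma_2}(z)=-\,w'u_2(z)(1-|\sigma_2|^2)^{-(\alpha+m-1)}$, whence $|\mathcal{T}_{\alpha+m-1}^{\varphi_2}(vu_2)(z)|\,\rho(z)$ is bounded as claimed.

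For (iii) I again take $a=\sigma_1$ and split, using $f_{\sigma_2}^{(m)}(\sigma_2)=(1-|\sigma_2|^2)^{-(\alpha+m-1)}$,
\[
\mathcal{T}_{\alpha+m-1}^{\varphi_1}(vu_1)(z)-\mathcal{T}_{\alpha+m-1}^{\varphi_2}(vu_2)(z)=v(z)\bigl(D_{\varphi_1,u_1}^m-D_{\varphi_2,u_2}^m\bigr)f_{\sigma_1}(z)+v(z)u_2(z)\bigl(f_{\sigma_1}^{(m)}(\sigma_2)-f_{\sigma_2}^{(m)}(\sigma_2)\bigr).
\]
The first summand is $\le\sup_{a\in\mathbb{D}}\|(D_{\varphi_1,u_1}^m-D_{\varphi_2,u_2}^m)f_a\|_v$. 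For the second, I apply Lemma \ref{lem di} to $f_{\sigma_1}$ at the points $\sigma_1,\sigma_2$: since $(1-|\sigma_1|^2)^{\alpha+m-1}f_{\sigma_1}^{(m)}(\sigma_1)=1$ and $\|f_{\sigma_1}\|_{\mathcal{B}^\alpha}\preceq1$, this gives $\bigl|1-(1-|\sigma_2|^2)^{\alpha+m-1}f_{\sigma_1}^{(m)}(\sigma_2)\bigr|\preceq\rho(z)$, i.e. $|f_{\sigma_1}^{(m)}(\sigma_2)-f_{\sigma_2}^{(m)}(\sigma_2)|\preceq\rho(z)(1-|\sigma_2|^2)^{-(\alpha+m-1)}$; hence the second summand is $\preceq|\mathcal{T}_{\alpha+m-1}^{\varphi_2}(vu_2)(z)|\,\rho(z)$, already controlled by the right-hand side through (ii). Taking $\sup_{z\in\mathbb{D}}$ proves (iii), and the final displayed inequality follows by adding (i)--(iii) (up to a fixed absolute constant, harmless under the $\preceq,\approx$ conventions of the introduction).

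I expect the only step beyond bookkeeping to be the comparison $|f_{\sigma_1}^{(m)}(\sigma_2)-f_{\sigma_2}^{(m)}(\sigma_2)|\preceq\rho(z)(1-|\sigma_2|^2)^{-(\alpha+m-1)}$ in (iii). One could grind it out by hand, writing the ratio $f_{\sigma_1}^{(m)}(\sigma_2)/f_{\sigma_2}^{(m)}(\sigma_2)=(1-|\sigma_1|^2)^\alpha(1-|\sigma_2|^2)^{\alpha+m-1}/(1-\bar{\sigma}_1\sigma_2)^{2\alpha+m-1}$ and estimating it via $1-\rho(z)^2=(1-|\sigma_1|^2)(1-|\sigma_2|^2)/|1-\bar{\sigma}_1\sigma_2|^2$ together with $|e^{i\theta}-1|$-type bounds on $\arg(1-\bar{\sigma}_1\sigma_2)$; but it is cleaner and sufficient simply to quote Lemma \ref{lem di} as above. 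Everything else reduces to the two exact identities of the second paragraph, the bounds $|w|,|w'|,\rho(z)\le1$, and the uniform $\mathcal{B}^\alpha$-boundedness of $\{f_a\}$ and $\{g_a\}$.
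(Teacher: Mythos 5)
Your argument is correct and follows essentially the same route as the paper: you test against $f_a$ and $g_a$ with $a=\varphi_1(z)$ or $\varphi_2(z)$, exploit $g_a^{(m)}=\varphi_a\,f_a^{(m)}$ (so $g_a^{(m)}(a)=0$), and for (iii) invoke Lemma \ref{lem di} applied to $f_{\varphi_1(z)}$ and then absorb the $|\mathcal{T}_{\alpha+m-1}^{\varphi_2}(vu_2)(z)|\rho(z)$ error via (ii), exactly as the paper does. Your only (cosmetic) deviation is packaging (i) and (ii) as exact cancellation identities $w\,(D f_{\sigma_1})-(D g_{\sigma_1})=\dfrac{w\,u_1(z)}{(1-|\sigma_1|^2)^{\alpha+m-1}}$ rather than chaining two triangle inequalities, and your closing remark that (iii) really only yields $\preceq$ rather than $\leq$ matches what the paper's own proof delivers.
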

\begin{proof} For any $z\in \mathbb{D},$ we obtain that
\begin{eqnarray}&&\|(D_{\varphi_1,u_1}^m-D_{\varphi_2,u_2}^m) f_{\varphi_1(z)}\|_v \geq v(z)|(D_{\varphi_1,u_1}^m-D_{\varphi_2,u_2}^m) f_{\varphi_1(z)}(z)|\nonumber\\&& = v(z)|u_1(z)f_{\varphi_1(z)}^{(m)}(\varphi_1(z))-u_2(z)f_{\varphi_1(z)}^{(m)}(\varphi_2(z))| \nonumber\\&&=v(z)\left| \frac{u_1(z)}{(1-|\varphi_1(z)|^2)^{\alpha+m-1}}- \frac{u_2(z)(1-|\varphi_1(z)|^2)^\alpha}{(1-\overline{\varphi_1(z)}\varphi_2(z))^{2\alpha+m-1}}\right|
\quad\quad\quad\label{D1}\\&&\geq \left|\mathcal{T}_{\alpha+m-1}^{\varphi_1}(vu_1)(z)-
\frac{(1-|\varphi_1(z)|^2)^\alpha(1-|\varphi_2(z)|^2)^{\alpha+m-1}}{(1-\overline{\varphi_1(z)}\varphi_2(z))
^{2\alpha+m-1}} \mathcal{T}_{\alpha+m-1}^{\varphi_2}(vu_2)(z)\right|\nonumber\\&&\geq \left|\mathcal{T}_{\alpha+m-1}^{\varphi_1}(vu_1)(z)\right|-
\frac{(1-|\varphi_1(z)|^2)^\alpha(1-|\varphi_2(z)|^2)^{\alpha+m-1}}{|1-\overline{\varphi_1(z)}\varphi_2(z)|
^{2\alpha+m-1}}\left| \mathcal{T}_{\alpha+m-1}^{\varphi_2}(vu_2)(z)\right|.\nonumber\end{eqnarray} Similarly, it turns out that

\begin{eqnarray*} &&\|(D_{\varphi_1,u_1}^m-D_{\varphi_2,u_2}^m) g_{\varphi_1(z)}\|_v\\ &&\geq v(z)|u_1(z)g_{\varphi_1(z)}^{(m)}(\varphi_1(z))-u_2(z)g_{\varphi_1(z)}^{(m)}(\varphi_2(z))| \nonumber\\&&=v(z)| u_2(z)| \frac{(1-|\varphi_1(z)|^2)^\alpha}{|1-\overline{\varphi_1(z)}\varphi_2(z)|^{2\alpha+m-1}} \rho(z)\\&&=   \frac{(1-|\varphi_1(z)|^2)^\alpha(1-|\varphi_2(z)|^2)^{\alpha+m-1}}{|1-\overline{\varphi_1(z)}
\varphi_2(z)|^{2\alpha+m-1}}
|\mathcal{T}_{\alpha+m-1}^{\varphi_2}(vu_2)(z)|\rho(z).
\end{eqnarray*} On the one hand, we employ the above two inequalities to obtain that
\begin{eqnarray}&&\left|\mathcal{T}_{\alpha+m-1}^{\varphi_1}(vu_1)(z)\right| \rho(z)  \leq \|(D_{\varphi_1,u_1}^m-D_{\varphi_2,u_2}^m) f_{\varphi_1(z)}\|_v  \rho(z)\nonumber \\  &&+   \frac{(1-|\varphi_1(z)|^2)^\alpha(1-|\varphi_2(z)|^2)^{\alpha+m-1}}{|1-\overline{\varphi_1(z)}
\varphi_2(z)|^{2\alpha+m-1}}
|\mathcal{T}_{\alpha+m-1}^{\varphi_2}(vu_2)(z)|\rho(z) \nonumber\\&& \leq  \|(D_{\varphi_1,u_1}^m-D_{\varphi_2,u_2}^m) f_{\varphi_1(z)}\|_v +\|(D_{\varphi_1,u_1}^m-D_{\varphi_2,u_2}^m) g_{\varphi_1(z)}\|_v, \label{T1}\end{eqnarray} where the last inequality follows from $\rho(z)\leq 1.$ Analogously,  we deduce that
\begin{eqnarray} \left|\mathcal{T}_{\alpha+m-1}^{\varphi_2}(vu_2)(z)\right| \rho(z)&& \leq  \|(D_{\varphi_1,u_1}^m-D_{\varphi_2,u_2}^m) f_{\varphi_2(z)}\|_v \nonumber\\&&+\|(D_{\varphi_1,u_1}^m-D_{\varphi_2,u_2}^m) g_{\varphi_2(z)}\|_v.\;\;\;\label{T2}\end{eqnarray}
From \eqref{T1} and \eqref{T2}, we arrive at
\begin{eqnarray}&&(i)\;\sup\limits_{z\in \mathbb{D}}\left|\mathcal{T}_{\alpha+m-1}^{\varphi_1}(vu_1)(z)\right| \rho(z)\nonumber\\&&\leq \sup\limits_{z\in \mathbb{D}}\left( \|(D_{\varphi_1,u_1}^m-D_{\varphi_2,u_2}^m) f_{\varphi_1(z)}\|_v +\|(D_{\varphi_1,u_1}^m-D_{\varphi_2,u_2}^m) g_{\varphi_1(z)}\|_v\right)\nonumber\\&& \leq\sup\limits_{a\in \mathbb{D}}\left( \|(D_{\varphi_1,u_1}^m-D_{\varphi_2,u_2}^m) f_a\|_v +\|(D_{\varphi_1,u_1}^m-D_{\varphi_2,u_2}^m) g_a\|_v\right).\\&&(ii)\; \sup\limits_{z\in \mathbb{D}}\left|\mathcal{T}_{\alpha+m-1}^{\varphi_2}(vu_2)(z)\right| \rho(z) \nonumber \\&&\leq  \sup\limits_{a\in \mathbb{D}}\left(\|(D_{\varphi_1,u_1}^m-D_{\varphi_2,u_2}^m) f_a\|_v +\|(D_{\varphi_1,u_1}^m-D_{\varphi_2,u_2}^m) g_a\|_v\right).\end{eqnarray}

On the other hand, we change \eqref{D1} into
\begin{eqnarray}&&\|(D_{\varphi_1,u_1}^m-D_{\varphi_2,u_2}^m) f_{\varphi_1(z)}\|_v\nonumber\\&&=v(z)\left| \frac{u_1(z)}{(1-|\varphi_1(z)|^2)^{\alpha+m-1}}- \frac{u_2(z)(1-|\varphi_1(z)|^2)^\alpha}{(1-\overline{\varphi_1(z)}\varphi_2(z))^{2\alpha+m-1}}\right|
\nonumber\\&&\geq \left|\mathcal{T}_{\alpha+m-1}^{\varphi_1}(vu_1)(z)
-\mathcal{T}_{\alpha+m-1}^{\varphi_2}(vu_2)(z)\right| - \frac{v(z)u_2(z)}{(1-|\varphi_2(z)|^2)^{\alpha+m-1}}\nonumber\\&& \cdot\left| (1-|\varphi_1(z)|^2)^{\alpha+m-1} f_{\varphi_1(z)}^{(m)}(\varphi_1(z))-(1-|\varphi_2(z)|^2)^{\alpha+m-1}f_{\varphi_1(z)}^{(m)}
(\varphi_2(z))\right| \nonumber\\&&= \left|\mathcal{T}_{\alpha+m-1}^{\varphi_1}(vu_1)(z)
-\mathcal{T}_{\alpha+m-1}^{\varphi_2}(vu_2)(z)\right|-\left|\mathcal{T}_{\alpha+m-1}^{\varphi_2}
(vu_2)(z)\right|\nonumber\\&& \cdot\left| (1-|\varphi_1(z)|^2)^{\alpha+m-1} f_{\varphi_1(z)}^{(m)}(\varphi_1(z))-(1-|\varphi_2(z)|^2)^{\alpha+m-1}f_{\varphi_1(z)}^{(m)}
(\varphi_2(z))\right| \nonumber\\&&\succeq \left|\mathcal{T}_{\alpha+m-1}^{\varphi_1}(vu_1)(z)
-\mathcal{T}_{\alpha+m-1}^{\varphi_2}(vu_2)(z)\right|-\left|\mathcal{T}_{\alpha+m-1}^{\varphi_2}
(vu_2)(z)\right|\rho(z), \nonumber\end{eqnarray} the last inequality is due to  Lemma \ref{lem di}.
From the above inequality it follows that \begin{eqnarray}
&&(iii)\;\sup\limits_{z\in \mathbb{D}}\left|\mathcal{T}_{\alpha+m-1}^{\varphi_1}(vu_1)(z)
-\mathcal{T}_{\alpha+m-1}^{\varphi_2}(vu_2)(z)\right|\nonumber\\&&\preceq \sup\limits_{z\in \mathbb{D}}\left( \|(D_{\varphi_1,u_1}^m-D_{\varphi_2,u_2}^m) f_{\varphi_1(z)}\|_v + \left|\mathcal{T}_{\alpha+m-1}^{\varphi_2}
(vu_2)(z)\right|\rho(z)\right)\nonumber\\&&\preceq \sup\limits_{a\in \mathbb{D}}\left( \|(D_{\varphi_1,u_1}^m-D_{\varphi_2,u_2}^m) f_{a}\|_v + \|(D_{\varphi_1,u_1}^m-D_{\varphi_2,u_2}^m) g_{a}\|_v \right). \label{T3}\end{eqnarray}
\eqref{T1}, \eqref{T2} together with \eqref{T3} imply the statement is true. This completes the proof.
\end{proof}
\begin{lemma}\label{lemma FGN}Let $m\in \mathbb{N}_0,$ $0<\alpha<\infty$ and $v$ be a weight. Suppose that $u_1, u_2\in H(\mathbb{D}),$ $\varphi_1, \varphi_2 \in S(\mathbb{D})$, then the following statements hold,
\begin{eqnarray*}&&(i)\;\;\sup\limits_{z\in \mathbb{D}} \|(D_{\varphi_1,u_1}^m-D_{\varphi_2,u_2}^m) f_{a}\|_v \preceq \sup\limits_{n\in \mathbb{N}_0}  n^{\alpha+m-1}\|u_1 \varphi_1^{n} -u_2 \varphi_2^n \|_v;\\&&
(ii)\;\;\sup\limits_{z\in \mathbb{D}} \|(D_{\varphi_1,u_1}^m-D_{\varphi_2,u_2}^m) g_{a}\|_v \preceq \sup\limits_{n\in \mathbb{N}_0}  n^{\alpha+m-1}\|u_1 \varphi_1^{n} -u_2 \varphi_2^n \|_v.\end{eqnarray*} That is to say, $ \sup\limits_{z\in \mathbb{D}} (\|(D_{\varphi_1,u_1}^m-D_{\varphi_2,u_2}^m) f_{a}\|_v +\|(D_{\varphi_1,u_1}^m-D_{\varphi_2,u_2}^m) g_{a}\|_v ) \preceq \sup\limits_{n\in \mathbb{N}_0}  n^{\alpha+m-1}\|u_1 \varphi_1^{n} -u_2 \varphi_2^n \|_v.$
\end{lemma}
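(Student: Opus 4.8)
The plan is to prove both bounds by a power-series argument; in fact I will show, uniformly in $a\in\mathbb{D}$, that $\|(D_{\varphi_1,u_1}^m-D_{\varphi_2,u_2}^m)f_a\|_v\preceq N$ and $\|(D_{\varphi_1,u_1}^m-D_{\varphi_2,u_2}^m)g_a\|_v\preceq N$, where $N:=\sup_{n\in\mathbb{N}_0}n^{\alpha+m-1}\|u_1\varphi_1^n-u_2\varphi_2^n\|_v$ (if $N=\infty$ there is nothing to prove). By \eqref{fgam}, $f_a^{(m)}(w)=(1-|a|^2)^\alpha(1-\bar aw)^{-(2\alpha+m-1)}=(1-|a|^2)^\alpha\sum_{n\ge0}c_n(\bar aw)^n$, where the standard asymptotics of binomial coefficients (equivalently, of the Gamma function) give $|c_n|\preceq(n+1)^{2\alpha+m-2}$. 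Substituting $w=\varphi_j(z)$ ($j=1,2$) and subtracting yields, with convergence uniform on compact subsets of $\mathbb{D}$,
\[(D_{\varphi_1,u_1}^m-D_{\varphi_2,u_2}^m)f_a=(1-|a|^2)^\alpha\sum_{n\ge0}c_n\bar a^n\,(u_1\varphi_1^n-u_2\varphi_2^n).\]
Multiplying by the weight $v$, using the triangle inequality and $|\bar a^n|\le1$, taking the supremum over $z$, and then applying $\|u_1\varphi_1^n-u_2\varphi_2^n\|_v\le Nn^{-(\alpha+m-1)}$ for $n\ge1$, one gets $\|(D_{\varphi_1,u_1}^m-D_{\varphi_2,u_2}^m)f_a\|_v\preceq(1-|a|^2)^\alpha\bigl(\|u_1-u_2\|_v+N\sum_{n\ge1}(n+1)^{\alpha-1}|a|^n\bigr)$. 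Since $\alpha>0$, the classical estimate $\sum_{n\ge0}(n+1)^{\gamma-1}r^n\approx(1-r)^{-\gamma}$ (valid for $\gamma>0$, $0\le r<1$) gives $\sum_{n\ge1}(n+1)^{\alpha-1}|a|^n\preceq(1-|a|)^{-\alpha}$, so $(1-|a|^2)^\alpha$ times this sum is bounded independently of $a$; the remaining term $(1-|a|^2)^\alpha\|u_1-u_2\|_v\le\|u_1-u_2\|_v$ is the $n=0$ contribution and is dominated by $N$. This proves $(i)$.

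For $(ii)$ the difficulty is that the extra factor $(a-w)/(1-\bar aw)$ in $g_a^{(m)}$ raises the growth order of the Taylor coefficients by one. By \eqref{fgam}, $g_a^{(m)}(w)=(1-|a|^2)^\alpha(a-w)(1-\bar aw)^{-(2\alpha+m)}$; writing $(1-\bar aw)^{-(2\alpha+m)}=\sum_{k\ge0}b_k(\bar aw)^k$ with $b_k>0$, $b_k\preceq(k+1)^{2\alpha+m-1}$, the coefficient of $w^n$ in $(a-w)(1-\bar aw)^{-(2\alpha+m)}$ equals $a$ for $n=0$ and $\bar a^{n-1}(|a|^2b_n-b_{n-1})$ for $n\ge1$. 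The crude bound $|\bar a^{n-1}(|a|^2b_n-b_{n-1})|\le|a|^{n-1}(b_n+b_{n-1})\preceq|a|^{n-1}(n+1)^{2\alpha+m-1}$ is too weak: after dividing by $n^{\alpha+m-1}$ and multiplying by $(1-|a|^2)^\alpha$ it leaves $(1-|a|^2)^\alpha\sum(n+1)^\alpha|a|^n\approx(1-|a|)^{-1}$, which is unbounded. The remedy is to extract the cancellation: from $b_n/b_{n-1}=(n+2\alpha+m-1)/n$ one obtains
\[|a|^2b_n-b_{n-1}=b_{n-1}\left(-(1-|a|^2)+\frac{(2\alpha+m-1)|a|^2}{n}\right),\]
so the coefficient of $w^n$ has modulus $\preceq|a|^{n-1}\bigl((1-|a|^2)(n+1)^{2\alpha+m-1}+(n+1)^{2\alpha+m-2}\bigr)$. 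Running the same computation as for $f_a$ (expand $g_a^{(m)}\circ\varphi_j$, subtract, multiply by $v$, take $\sup_z$, and use $\|u_1\varphi_1^n-u_2\varphi_2^n\|_v\le Nn^{-(\alpha+m-1)}$ for $n\ge1$), the $n\ge1$ part is at most a constant times
\[N(1-|a|^2)^\alpha\Bigl((1-|a|^2)\sum_{n\ge0}(n+1)^\alpha|a|^n+\sum_{n\ge0}(n+1)^{\alpha-1}|a|^n\Bigr),\]
which by the classical estimate (with $\gamma=\alpha+1$ and $\gamma=\alpha$) is $\preceq N(1-|a|^2)^\alpha\bigl((1-|a|^2)(1-|a|)^{-(\alpha+1)}+(1-|a|)^{-\alpha}\bigr)\preceq N$; the $n=0$ term $a(1-|a|^2)^\alpha(u_1-u_2)$ is handled exactly as before. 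This proves $(ii)$, and adding $(i)$ and $(ii)$ gives the final assertion of the lemma.

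The main obstacle, as indicated, is the $g_a$ estimate: only after isolating the cancellation in $|a|^2b_n-b_{n-1}$ — which reinstates an extra factor of order $(1-|a|^2)+O(1/n)$ — does the weighted series stay bounded as $|a|\to1$; a direct term-by-term bound on the Taylor coefficients of $g_a^{(m)}$ diverges. The $f_a$ estimate, the coefficient asymptotics, the classical power-series bound $\sum_{n\ge0}(n+1)^{\gamma-1}r^n\approx(1-r)^{-\gamma}$, and the disposal of the $n=0$ term are all routine.
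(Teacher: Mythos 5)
Your argument is correct and follows essentially the same route as the paper: expand the test functions in power series, bound the coefficients by Stirling-type asymptotics, pull out $\sup_{n}n^{\alpha+m-1}\|u_1\varphi_1^{n}-u_2\varphi_2^{n}\|_v$ termwise, and close with $\sum_{n}(n+1)^{\gamma-1}r^{n}\approx(1-r)^{-\gamma}$. The only difference is bookkeeping in the $g_a$ estimate: the paper extracts the cancellation at the level of the function, via $\frac{a-w}{1-\bar aw}=a-(1-|a|^2)\frac{w}{1-\bar aw}$, splitting $g_a=af_a-(1-|a|^2)^{\alpha+1}(\cdots)$ with positive coefficients of order $k^{2\alpha+m-1}$, whereas you extract the identical factor $(1-|a|^2)$ directly from the Taylor coefficients through $|a|^2b_n-b_{n-1}=b_{n-1}\bigl(-(1-|a|^2)+(2\alpha+m-1)|a|^2/n\bigr)$ — the same cancellation, and both versions close the estimate.
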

\begin{proof} Recall that $$\frac{1}{(1-\bar{a}t_1)^{2\alpha+m-1}}=\sum_{k=0}^\infty \frac{\Gamma(k+2\alpha+m-1)}{\Gamma(2\alpha+m-1) k!}(\bar{a}t_1)^k.$$ Integrating the above display we express $f_a$ and $g_a$ into Maclaurin expansion respectively, as following
\begin{eqnarray*} f_a(z)&=&(1-|a|^2)^\alpha  \int_0^z\int_0^{t_m}\cdots \int_0^{t_2} \sum_{k=0}^\infty \frac{\Gamma(k+2\alpha+m-1)}{\Gamma(2\alpha+m-1) k!}(\bar{a}t_1)^kdt_1dt_2\cdots dt_m\nonumber\\&=& (1-|a|^2)^\alpha\sum_{k=0}^\infty \frac{\Gamma(k+2\alpha+m-1)}{\Gamma(2\alpha+m-1) (k+m)!}(\bar{a} )^k z^{k+m},\;z\in \mathbb{D}. \end{eqnarray*} On the other hand, \begin{eqnarray*}&&\frac{(1-|a|^2)^\alpha}{(1-\bar{a}t_1)^{2\alpha+m-1}}\cdot \frac{a-t_1}{1-\bar{a}t_1} \\&&=(1-|a|^2)^\alpha\left(\sum_{k=0}^\infty \frac{\Gamma(k+2\alpha+m-1)}{\Gamma(2\alpha+m-1)k!} \bar{a}^kt_1^k\right)\left( \frac{a(1-\bar{a}t_1)+|a|^2 t_1-t_1}{1-\bar{a}t_1}\right)\\&&=(1-|a|^2)^\alpha\left(\sum_{k=0}^\infty \frac{\Gamma(k+2\alpha+m-1)}{\Gamma(2\alpha+m-1)k!} \bar{a}^kt_1^k\right)\left( a-(1-|a|^2)\frac{t_1}{1-\bar{a}t_1}\right)\nonumber\\&& =(1-|a|^2)^\alpha\left(\sum_{k=0}^\infty \frac{\Gamma(k+2\alpha+m-1)}{\Gamma(2\alpha+m-1)k!} \bar{a}^kt_1^k\right)\left( a-(1-|a|^2)\sum_{k=0}^\infty \bar{a}^k t_1^{k+1}\right)\nonumber\\&&=a(1-|a|^2)^\alpha\left(\sum_{k=0}^\infty \frac{\Gamma(k+2\alpha+m-1)}{\Gamma(2\alpha+m-1)k!} \bar{a}^kt_1^k\right)\nonumber\\&&-(1-|a|^2)^{\alpha+1} \left(\sum_{k=0}^\infty \frac{\Gamma(k+2\alpha+m-1)}{\Gamma(2\alpha+m-1)k!} \bar{a}^kt_1^k\right)\left( \sum_{k=0}^\infty \bar{a}^k t_1^{k+1}\right)\nonumber\\&&= a(1-|a|^2)^\alpha\left(\sum_{k=0}^\infty \frac{\Gamma(k+2\alpha+m-1)}{\Gamma(2\alpha+m-1)k!} \bar{a}^kt_1^k\right)\nonumber\\&&- (1-|a|^2)^{\alpha+1}\sum_{k=1}^\infty\left( \sum_{l=0}^{k-1} \frac{\Gamma(l+2\alpha+m-1)}{\Gamma(2\alpha+m-1)l!}\right)\bar{a}^{k-1}t_1^k.\end{eqnarray*}
\begin{eqnarray} &&g_a(z) =\int_0^z\int_0^{t_m}\cdots \int_0^{t_2} \frac{(1-|a|^2)^\alpha}{(1-\bar{a}t_1)^{2\alpha+m-1}}\cdot \frac{a-t}{1-\bar{a}t_1} dt_1\cdots dt_m\nonumber\\&&=af_a(z)-(1-|a|^2)^{\alpha+1} \int_0^z\cdots \int_0^{t_2}  \sum_{k=1}^\infty\left( \sum_{l=0}^{k-1} \frac{\Gamma(l+2\alpha+m-1)}{\Gamma(2\alpha+m-1)l!}\bar{a}^{k-1}t_1^k\right)
dt_1\cdots dt_m\nonumber\\&&=af_a(z)- (1-|a|^2)^{\alpha+1} \sum_{k=1}^\infty \frac{k!}{(k+m)!}\left(\sum_{l=0}^{k-1} \frac{\Gamma(l+2\alpha+m-1)}{\Gamma(2\alpha+m-1)l!}\bar{a}^{k-1}z^{k+m} \right).\label{ga}\end{eqnarray}
On  the other hand, we deduce that
\begin{eqnarray}&&\|(D_{\varphi_1,u_1}^m-D_{\varphi_2,u_2}^m)f_a\|_{v}\nonumber\\&\leq & (1-|a|^2)^\alpha\sum_{k=0}^\infty \frac{\Gamma(k+2\alpha+m-1)}{\Gamma(2\alpha+m-1) (k+m)!}|\bar{a} |^k \frac{(k+m)!}{k!} \|u_1 \varphi_1^{k} -u_2 \varphi_2^k \|_v\nonumber\\&=&(1-|a|^2)^\alpha\sum_{k=0}^\infty \frac{\Gamma(k+2\alpha+m-1)}{\Gamma(2\alpha+m-1) k!}|\bar{a} |^k   \|u_1 \varphi_1^{k} -u_2 \varphi_2^k \|_v\label{F} \\&\leq & (1-|a|^2)^\alpha\sum_{k=0}^\infty \frac{\Gamma(k+2\alpha+m-1)}{\Gamma(2\alpha+m-1) k!}|\bar{a} |^k k^{-\alpha-m+1}\nonumber\\&&\cdot\sup\limits_{n\in \mathbb{N}_0}  n^{\alpha+m-1}\|u_1 \varphi_1^{n} -u_2 \varphi_2^n \|_v. \label{F0}\end{eqnarray}
By Stirling's formula, it follows that
\begin{eqnarray*}\left\{
                   \begin{array}{ll}
                     \frac{\Gamma(k+\alpha)}{k!\Gamma(\alpha)} \approx k^{\alpha-1},& \mbox{as}\;k\rightarrow \infty;                     \vspace{5mm} \\ \frac{\Gamma(k+2\alpha+m-1)}{\Gamma(2\alpha+m-1) k!} k^{-\alpha-m+1}\approx k^{\alpha-1}, & \mbox{as}\;k\rightarrow \infty.
                   \end{array}
                 \right.
 \end{eqnarray*} Therefore it yields that
\begin{eqnarray}&&\frac{1}{(1-|a|)^\alpha}=\sum_{k=0}^\infty \frac{\Gamma(k+\alpha)}{k!\Gamma(\alpha)} |a|^k \nonumber\\&\approx& \sum_{k=0}^\infty k^{\alpha-1} |a|^k\nonumber\\&\approx& \sum_{k=0}^\infty \frac{\Gamma(k+2\alpha+m-1)}{\Gamma(2\alpha+m-1) k!} k^{-\alpha-m+1}|\bar{a}|^k. \label{exp} \end{eqnarray}
Putting \eqref{exp} into \eqref{F0}, we deduce that
\begin{eqnarray} \|(D_{\varphi_1,u_1}^m-D_{\varphi_2,u_2}^m)f_a\|_{v}\preceq \sup\limits_{n\in \mathbb{N}_0}  n^{\alpha+m-1}\|u_1 \varphi_1^{n} -u_2 \varphi_2^n \|_v\end{eqnarray}
Using \eqref{ga} it turns out that
\begin{eqnarray*}&&\|(D_{\varphi_1,u_1}^m-D_{\varphi_2,u_2}^m)g_a\|_{v}\preceq \|(D_{\varphi_1,u_1}^m-D_{\varphi_2,u_2}^m)f_a\|_{v} \nonumber\\&&+  (1-|a|^2)^{\alpha+1} \sum_{k=1}^\infty \frac{k!}{(k+m)!}\left(\sum_{l=0}^{k-1} \frac{\Gamma(l+2\alpha+m-1)}{\Gamma(2\alpha+m-1)l!}\right)|\bar{a}|^{k-1}\nonumber\\&&\cdot\frac{(k+m)!}{k!}
\|u_1\varphi_1^{k} -u_2 \varphi_2^k \|_v\nonumber\\&&=|(D_{\varphi_1,u_1}^m-D_{\varphi_2,u_2}^m)f_a\|_{v} \nonumber\\&&+  (1-|a|^2)^{\alpha+1} \sum_{k=1}^\infty \left(\sum_{l=0}^{k-1} \frac{\Gamma(l+2\alpha+m-1)}{\Gamma(2\alpha+m-1)l!}\right)|\bar{a}|^{k-1}
\|u_1 \varphi_1^{k} -u_2 \varphi_2^k \|_v.
 \end{eqnarray*}
 Furthermore by Stirling's formula again, we obtain
 \begin{eqnarray*} &&\sum_{l=0}^{k-1} \frac{\Gamma(l+2\alpha+m-1)}{\Gamma(2\alpha+m-1)l!}\\&\approx& \sum_{l=0}^{k-1} l^{2\alpha+m-2}\\&\approx & k^{2\alpha+m-1},\;\mbox{as}\;k\rightarrow \infty. \end{eqnarray*} For simplicity, we  denote $$a_k= \sum_{l=0}^{k-1} l^{2\alpha+m-2},$$  and then the last equivalence is due to the following fact,   \begin{eqnarray*} && k^{2\alpha+m-1}-(k-1)^{2\alpha+m-1}\\&=&(k-1+1)^{2\alpha+m-1}-(k-1)^{2\alpha+m-1}\nonumber\\&=&
 (k-1)^{2\alpha+m-1}+(2\alpha+m-1)(k-1)^{2\alpha+m-2}+\cdots+1-(k-1)^{2\alpha+m-1}.\end{eqnarray*} We deduce from Stole formula   that
 \begin{eqnarray*} &&\lim\limits_{k\rightarrow \infty} \frac{a_k}{k^{2\alpha+m-1} }\\&=&\lim\limits_{k\rightarrow \infty}\frac{a_k-a_{k-1}}{k^{2\alpha+m-1}-(k-1)^{2\alpha+m-1}}\nonumber\\&=&   \lim\limits_{k\rightarrow \infty}\frac{(k-1)^{2\alpha+m-2} }{k^{2\alpha+m-1}-(k-1)^{2\alpha+m-1}}\nonumber\\&=&\lim\limits_{k\rightarrow \infty}\frac{(k-1)^{2\alpha+m-2} }{(2\alpha+m-1)(k-1)^{2\alpha+m-2}+\cdots+1 }\nonumber\\&=&\frac{1}{2\alpha+m-1}. \end{eqnarray*} Therefore, it yields  that \begin{eqnarray}&&\|(D_{\varphi_1,u_1}^m-D_{\varphi_2,u_2}^m)g_a\|_{v}\preceq \|(D_{\varphi_1,u_1}^m-D_{\varphi_2,u_2}^m)f_a\|_{v}\nonumber\\&&+
 (1-|a|^2)^{\alpha+1} \sum_{k=1}^\infty k^{2\alpha+m-1}|\bar{a}|^{k-1}  \|u_1\varphi_1^{k}-u_2\varphi_2^k\|_v \label{Dga}\\&& \leq \|(D_{\varphi_1,u_1}^m-D_{\varphi_2,u_2}^m)f_a\|_{v}\nonumber\\&&+
 (1-|a|^2)^{\alpha+1} \sum_{k=1}^\infty k^{2\alpha+m-1}|\bar{a}|^{k-1} k^{-\alpha-m+1} \sup\limits_{n\in \mathbb{N}_0} n^{\alpha+m-1}\|u_1 \varphi_1^{n} -u_2 \varphi_2^n \|_v
 \nonumber\\&& \preceq \|(D_{\varphi_1,u_1}^m-D_{\varphi_2,u_2}^m)f_a\|_{v} \nonumber\\&& + (1-|a|^2)^{\alpha+1} \sum_{k=1}^\infty k^{\alpha}|\bar{a}|^{k-1}   \sup\limits_{n\in \mathbb{N}_0} n^{\alpha+m-1} \|u_1 \varphi_1^{n} -u_2 \varphi_2^n \|_v\nonumber\\&&\preceq \|(D_{\varphi_1,u_1}^m-D_{\varphi_2,u_2})f_a\|_{v}\nonumber\\&&+ (1-|a|^2)^{\alpha+1}
 \frac{1}{(1-|a|)^{\alpha+1}}\sup\limits_{n\in \mathbb{N}_0} n^{\alpha+m-1} \|u_1 \varphi_1^{n} -u_2 \varphi_2^n \|_v \nonumber\\&&\preceq \sup\limits_{n\in \mathbb{N}_0} n^{\alpha+m-1} \|u_1 \varphi_1^{n} -u_2 \varphi_2^n \|_v. \nonumber\end{eqnarray} This completes the proof.
  \end{proof}

In this section , our main result is exhibited below:
\begin{theorem} Let $m\in \mathbb{N}_0,$ $0<\alpha<\infty$ and $v$ be a weight. Suppose $u_1, u_2\in H(\mathbb{D}),$ $\varphi_1, \varphi_2 \in S(\mathbb{D})$. Then the following statements are equivalent,\vspace{2mm}

$(i)$ $D_{\varphi_1,u_1}^{m}-D_{\varphi_2,u_2}^{m}: \mathcal{B}^\alpha\rightarrow H_v^\infty$ is bounded;

$(ii)$ \begin{eqnarray*}&&\sup\limits_{z\in \mathbb{D}}|\mathcal{T}_{\alpha+m-1}^{\varphi_1}(vu_1)(z)|\rho(z)+\sup\limits_{z\in \mathbb{D}}\left|\mathcal{T}_{\alpha+m-1}^{\varphi_1}(vu_1)(z) - \mathcal{T}_{\alpha+m-1}^{\varphi_2}(vu_2)(z) \right| <\infty, \quad\quad\quad\quad\label{}\\&&  \sup\limits_{z\in \mathbb{D}}|\mathcal{T}_{\alpha+m-1}^{\varphi_2}(vu_2)(z)|\rho(z)+\sup\limits_{z\in \mathbb{D}}\left|\mathcal{T}_{\alpha+m-1}^{\varphi_1}(vu_1)(z) - \mathcal{T}_{\alpha+m-1}^{\varphi_2}(vu_2)(z) \right| <\infty;
\end{eqnarray*}

$(iii)$ \begin{eqnarray*} \sup\limits_{a\in \mathbb{D}}\|(D_{\varphi_1,u_1}^m-D_{\varphi_2,u_2}^m) f_a\|_v+\sup\limits_{a\in \mathbb{D}} \|(D_{\varphi_1,u_1}^m-D_{\varphi_2,u_2}^m) g_a\|_v <\infty;  \end{eqnarray*}

$(iv)$ \begin{eqnarray*}\sup\limits_{n\in \mathbb{N}_0}  n^{\alpha+m-1}\|u_1 \varphi_1 ^{n}-u_2 \varphi_2^n \|_v<\infty. \end{eqnarray*}
\end{theorem}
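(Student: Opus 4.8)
The plan is to prove the chain of implications $(i)\Rightarrow(iv)\Rightarrow(iii)\Rightarrow(ii)\Rightarrow(i)$. Two of these arrows come for free: $(iv)\Rightarrow(iii)$ is exactly the content of Lemma \ref{lemma FGN}, and $(iii)\Rightarrow(ii)$ is exactly Lemma \ref{lemma TFG}, because finiteness of the right-hand side there forces finiteness of all three displayed suprema, and those three suprema between them majorize the two pairs of quantities appearing in $(ii)$. So the genuine work is confined to the first and the last arrow.

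For $(i)\Rightarrow(iv)$ I would feed the monomials $p_n(z)=z^n$, $n\in\mathbb{N}_0$, into the bounded operator. Since $p_n^{(m)}(z)=\tfrac{n!}{(n-m)!}z^{n-m}$ for $n\ge m$, we have $(D_{\varphi_1,u_1}^m-D_{\varphi_2,u_2}^m)p_n=\tfrac{n!}{(n-m)!}\,(u_1\varphi_1^{\,n-m}-u_2\varphi_2^{\,n-m})$, and therefore
\[
\frac{n!}{(n-m)!}\,\|u_1\varphi_1^{\,n-m}-u_2\varphi_2^{\,n-m}\|_v\le \|D_{\varphi_1,u_1}^m-D_{\varphi_2,u_2}^m\|\cdot\|p_n\|_{\mathcal{B}^\alpha}.
\]
The one quantitative input is the elementary asymptotic $\|p_n\|_{\mathcal{B}^\alpha}=n\sup_{0\le r<1}(1-r^2)^\alpha r^{\,n-1}\approx n^{1-\alpha}$, obtained by maximizing $(1-r^2)^\alpha r^{n-1}$ in $r$ (the maximum is attained near $r^2=\tfrac{n-1}{n-1+2\alpha}$), together with $\tfrac{n!}{(n-m)!}\approx n^{m}$. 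Substituting these and re-indexing by $k=n-m$ gives $(k+m)^{\alpha+m-1}\|u_1\varphi_1^{\,k}-u_2\varphi_2^{\,k}\|_v\preceq\|D_{\varphi_1,u_1}^m-D_{\varphi_2,u_2}^m\|$ for every $k$, and since $k^{\alpha+m-1}\preceq(k+m)^{\alpha+m-1}$ the supremum in $(iv)$ is finite.

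For $(ii)\Rightarrow(i)$, the conceptual core, fix $f\in\mathcal{B}^\alpha$ and set $\tilde F(w)=(1-|w|^2)^{\alpha+m-1}f^{(m)}(w)$. As recalled just before Lemma \ref{lem di}, $\sup_{w\in\mathbb{D}}|\tilde F(w)|=\|f^{(m)}\|_{H_{\alpha+m-1}^\infty}\preceq\|f\|_{\mathcal{B}^\alpha}$, while Lemma \ref{lem di} applied at the points $\varphi_1(z),\varphi_2(z)$ gives $|\tilde F(\varphi_1(z))-\tilde F(\varphi_2(z))|\le C\|f\|_{\mathcal{B}^\alpha}\rho(z)$. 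Rewriting
\[
v(z)\big|(D_{\varphi_1,u_1}^m-D_{\varphi_2,u_2}^m)f(z)\big|=\big|\mathcal{T}_{\alpha+m-1}^{\varphi_1}(vu_1)(z)\,\tilde F(\varphi_1(z))-\mathcal{T}_{\alpha+m-1}^{\varphi_2}(vu_2)(z)\,\tilde F(\varphi_2(z))\big|
\]
and decomposing the right-hand side as
\[
\mathcal{T}_{\alpha+m-1}^{\varphi_1}(vu_1)(z)\big[\tilde F(\varphi_1(z))-\tilde F(\varphi_2(z))\big]+\big[\mathcal{T}_{\alpha+m-1}^{\varphi_1}(vu_1)(z)-\mathcal{T}_{\alpha+m-1}^{\varphi_2}(vu_2)(z)\big]\tilde F(\varphi_2(z)),
\]
the triangle inequality together with the two estimates above yields
\begin{align*}
\|(D_{\varphi_1,u_1}^m-D_{\varphi_2,u_2}^m)f\|_v&\le C\|f\|_{\mathcal{B}^\alpha}\Big(\sup_{z\in\mathbb{D}}|\mathcal{T}_{\alpha+m-1}^{\varphi_1}(vu_1)(z)|\,\rho(z)\\
&\quad+\sup_{z\in\mathbb{D}}\big|\mathcal{T}_{\alpha+m-1}^{\varphi_1}(vu_1)(z)-\mathcal{T}_{\alpha+m-1}^{\varphi_2}(vu_2)(z)\big|\Big),
\end{align*}
and the bracketed quantity is finite by the first line of $(ii)$; hence $D_{\varphi_1,u_1}^m-D_{\varphi_2,u_2}^m:\mathcal{B}^\alpha\to H_v^\infty$ is bounded (one could just as well split around $\mathcal{T}_{\alpha+m-1}^{\varphi_2}(vu_2)$ and invoke the second line).

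I expect the genuinely delicate estimates of the whole circle to be the ones already carried out in Lemma \ref{lemma FGN} — the Maclaurin-coefficient expansions of $f_a$ and $g_a$ and the accompanying Stirling and Stolz asymptotics. Within the proof of the theorem itself, the step that needs the most care is $(i)\Rightarrow(iv)$: pinning down the asymptotic order of $\|p_n\|_{\mathcal{B}^\alpha}$ and, above all, keeping track of the degenerate small-index terms after re-indexing, so that the supremum over the whole of $\mathbb{N}_0$ — and not merely over large $n$ — stays finite.
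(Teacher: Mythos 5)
Your proposal is correct and follows essentially the same route as the paper: the same cycle $(i)\Rightarrow(iv)\Rightarrow(iii)\Rightarrow(ii)\Rightarrow(i)$, with $(iv)\Rightarrow(iii)$ and $(iii)\Rightarrow(ii)$ delegated to Lemmas \ref{lemma FGN} and \ref{lemma TFG}, the monomials $z^n$ together with $\|z^n\|_{\mathcal{B}^\alpha}\approx n^{1-\alpha}$ for $(i)\Rightarrow(iv)$, and the identical two-term decomposition via Lemma \ref{lem di} for $(ii)\Rightarrow(i)$. Your only additions are cosmetic: you derive the norm asymptotic by direct maximization where the paper cites it, and you flag the small-index bookkeeping after re-indexing, which the paper passes over silently.
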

\begin{proof} The implications $(iv)\Rightarrow (iii)\Rightarrow (ii)$ yield  from Lemma \ref{lemma FGN} and Lemma \ref{lemma TFG}, respectively. We only need to prove $(i)\Rightarrow (iv)$ and $(ii)\Rightarrow (i)$ below.

$(i)\Rightarrow (iv)$. Suppose that $D_{\varphi_1,u_1}^{m}-D_{\varphi_2,u_2}^{m}: \mathcal{B}^\alpha\rightarrow H_v^\infty$ is bounded; that is, $\|D_{\varphi_1,u_1}^{m}-D_{\varphi_2,u_2}^{m}\|_{\mathcal{B}^\alpha\rightarrow H_v^\infty}<\infty.$  Considering the function $z^n,$ we have known that $\|z^n\|_{\mathcal{B}^\alpha}\approx n^{1-\alpha}$ from \cite[Section 2(6)]{LZ1}. And  then we obtain that
\begin{eqnarray}&&\|D_{\varphi_1,u_1}^{m}-D_{\varphi_2,u_2}^{m}\|_{\mathcal{B}^\alpha\rightarrow H_v^\infty}  \nonumber\\&&\succeq \left\|(D_{\varphi_1,u_1}^{m}-D_{\varphi_2,u_2}^{m})\frac{z^n}{\|z^n\|_{\mathcal{B}^\alpha}}
\right\|_v
\nonumber \\&&\succeq \frac{n!}{(n-m)!n^{1-\alpha}}\|u_1 \varphi_1^{n-m}-u_2\varphi_2^{n-m}\|_v\nonumber\\&& \approx (n-m)^{\alpha+m-1}\|u_1 \varphi_1^{n-m}-u_2\varphi_2^{n-m}\|_v\nonumber\\&& \approx n^{\alpha+m-1}\|u_1 \varphi_1^{n-m}-u_2\varphi_2^{n-m}\|_v. \label{zn}\end{eqnarray}
\eqref{zn} implies  \begin{eqnarray*}\sup\limits_{n\in \mathbb{N}_0}  n^{\alpha+m-1}\|u_1 \varphi_1 ^{n}-u_2 \varphi_2^n \|_v\preceq\|D_{\varphi_1,u_1}^{m}-D_{\varphi_2,u_2}^{m}\|_{\mathcal{B}^\alpha\rightarrow H_v^\infty}  <\infty, \end{eqnarray*}then the implication  $(i)\Rightarrow (iv)$ follows.

$(ii)\Rightarrow (i).$ For any $f\in \mathcal{B}^\alpha,$ we employ  Lemma \ref{lem di} to show  that
\begin{eqnarray}&&\|(D_{\varphi_1,u_1}^m-D_{\varphi_2,u_2}^m) f\|_{H_v^\infty}=\sup\limits_{z\in \mathbb{D}} v(z)|u_1(z)f^{(m)}(\varphi_1(z))-u_2(z)f^{(m)}(z)|\nonumber\\&&\preceq\sup\limits_{z\in \mathbb{D}}\left|\mathcal{T}_{\alpha+m-1}^{\varphi_1}(vu_1)(z)\right|\left|(1-|\varphi_1|^2)^{\alpha+m-1}
f^{(m)}(\varphi_1(z))-(1-|\varphi_2|^2)^{\alpha+m-1} f^{(m)}(\varphi_2(z)) \right|\nonumber\\&&+\sup\limits_{z\in \mathbb{D}} (1-|\varphi_2|^2)^{\alpha+m-1}|f^{(m)}(\varphi_2(z))|\left|\mathcal{T}_{\alpha+m-1}^{\varphi_1}(vu_1)(z)-
\mathcal{T}_{\alpha+m-1}^{\varphi_2}(vu_2)(z)\right|\nonumber\\&&\preceq \sup\limits_{z\in \mathbb{D}} |\mathcal{T}_{\alpha+m-1}^{\varphi_1}(vu_1)(z)|\rho(z) +\sup\limits_{z\in \mathbb{D}}\left|\mathcal{T}_{\alpha+m-1}^{\varphi_1}(vu_1)(z)-
\mathcal{T}_{\alpha+m-1}^{\varphi_2}(vu_2)(z)\right|<\infty.\label{TRHO1}  \end{eqnarray}
Analogously to \eqref{TRHO1}, we can also obtain that
\begin{eqnarray}&&\|(D_{\varphi_1,u_1}^m-D_{\varphi_2,u_2}^m) f\|_{H_v^\infty}\nonumber\\&& \preceq \sup\limits_{z\in \mathbb{D}} |\mathcal{T}_{\alpha+m-1}^{\varphi_2}(vu_2)(z)|\rho(z) +\sup\limits_{z\in \mathbb{D}}\left|\mathcal{T}_{\alpha+m-1}^{\varphi_1}(vu_1)(z)-
\mathcal{T}_{\alpha+m-1}^{\varphi_2}(vu_2)(z)\right|<\infty.\quad\quad\quad\label{TRHO2}  \end{eqnarray} The above two inequalities imply that each one of conditions $(ii)$ can ensure the boundedness of  $D_{\varphi_1,u_1}^{m}-D_{\varphi_2,u_2}^{m}: \mathcal{B}^\alpha\rightarrow H_v^\infty$. This finishes the proof. \end{proof}

\section{ The compactness of $ D_{\varphi_1,u_1}^m-D_{\varphi_2,u_2}^m:  \mathcal{B}^\alpha\rightarrow H_v^\infty$}
 In this section, we  give some lemmas to provide the results for the compactness of $ D_{\varphi_1,u_1}^m-D_{\varphi_2,u_2}^m:  \mathcal{B}^\alpha\rightarrow H_v^\infty$. Firstly, we give some parallel results from Lemma \ref{lemma TFG}  as follows.
 \begin{lemma} \label{lem TGN}Let $m\in \mathbb{N}_0,$ $0<\alpha<\infty$ and $v$ be a weight. Suppose $u_1, u_2\in H(\mathbb{D}),$ $\varphi_1, \varphi_2 \in S(\mathbb{D})$. Then the following inequalities hold,

\begin{eqnarray*}&&(i)\;\;\lim\limits_{r\rightarrow 1}\sup\limits_{|\varphi_1(z)|>r}\left|\mathcal{T}_{\alpha+m-1}^{\varphi_1}(vu_1)(z)\right| \rho(z) \nonumber\\&& \preceq \limsup\limits_{|a|\rightarrow 1} \|(D_{\varphi_1,u_1}^m-D_{\varphi_2,u_2}^m) f_a\|_v +\limsup\limits_{|a|\rightarrow 1} \|(D_{\varphi_1,u_1}^m-D_{\varphi_2,u_2}^m) g_a\|_v. \\&& (ii)\;\; \lim\limits_{r\rightarrow 1}\sup\limits_{|\varphi_2(z)|>r}\left|\mathcal{T}_{\alpha+m-1}^{\varphi_2}(vu_2)(z)\right| \rho(z) \nonumber \\&&\preceq \limsup\limits_{|a|\rightarrow 1}\|(D_{\varphi_1,u_1}^m-D_{\varphi_2,u_2}^m) f_a\|_v +\limsup\limits_{|a|\rightarrow 1}\|(D_{\varphi_1,u_1}^m-D_{\varphi_2,u_2}^m) g_a\|_v.\\&& (iii)\;\;\lim\limits_{r\rightarrow 1}\sup\limits_{\min\{|\varphi_1(z)|,|\varphi_2(z)|\}>r}\left|\mathcal{T}_{\alpha+m-1}^{\varphi_1}(vu_1)(z)-\mathcal{T}_{\alpha+m-1}^{\varphi_2}
(vu_2)(z)\right|\nonumber\\&& \preceq \limsup\limits_{|a|\rightarrow 1}\|(D_{\varphi_1,u_1}^m-D_{\varphi_2,u_2}^m) f_a\|_v +\limsup\limits_{|a|\rightarrow 1}\|(D_{\varphi_1,u_1}^m-D_{\varphi_2,u_2}^m) g_a\|_v.  \end{eqnarray*}\end{lemma}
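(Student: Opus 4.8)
The plan is to run the proof of Lemma~\ref{lemma TFG} again, but this time localized to the level sets $\{z:|\varphi_1(z)|>r\}$, $\{z:|\varphi_2(z)|>r\}$ and $\{z:\min\{|\varphi_1(z)|,|\varphi_2(z)|\}>r\}$, and to exploit the elementary observation that substituting $a=\varphi_i(z)$ with $|\varphi_i(z)|>r$ forces the parameter $a$ to lie in $\{|a|>r\}$. Since $\limsup_{|a|\to1}F(a)=\lim_{r\to1}\sup_{|a|>r}F(a)$, controlling the localized left-hand sides by suprema of the test-function norms over $\{|a|>r\}$ and then sending $r\to1$ will give exactly the three asserted estimates. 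All constants appearing are the $\preceq$-constants from Lemma~\ref{lem di}, hence independent of $z$ and of $r$.

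For $(i)$ I would start from inequality \eqref{T1}, i.e.
$$\left|\mathcal{T}_{\alpha+m-1}^{\varphi_1}(vu_1)(z)\right|\rho(z)\le \|(D_{\varphi_1,u_1}^m-D_{\varphi_2,u_2}^m)f_{\varphi_1(z)}\|_v+\|(D_{\varphi_1,u_1}^m-D_{\varphi_2,u_2}^m)g_{\varphi_1(z)}\|_v,$$
take the supremum over those $z$ with $|\varphi_1(z)|>r$, note that the right-hand side is then dominated by $\sup_{|a|>r}\bigl(\|(D_{\varphi_1,u_1}^m-D_{\varphi_2,u_2}^m)f_a\|_v+\|(D_{\varphi_1,u_1}^m-D_{\varphi_2,u_2}^m)g_a\|_v\bigr)$, and let $r\to1$. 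Part $(ii)$ is the mirror image, using \eqref{T2} with $a=\varphi_2(z)$ and restricting to $|\varphi_2(z)|>r$.

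For $(iii)$, which is the delicate case, I would rework the second chain of inequalities in the proof of Lemma~\ref{lemma TFG} (the one leading to \eqref{T3}) with $a=\varphi_1(z)$, obtaining, for every $z$ with $|\varphi_1(z)|>r$,
$$\left|\mathcal{T}_{\alpha+m-1}^{\varphi_1}(vu_1)(z)-\mathcal{T}_{\alpha+m-1}^{\varphi_2}(vu_2)(z)\right|\preceq \|(D_{\varphi_1,u_1}^m-D_{\varphi_2,u_2}^m)f_{\varphi_1(z)}\|_v+\left|\mathcal{T}_{\alpha+m-1}^{\varphi_2}(vu_2)(z)\right|\rho(z),$$
via Lemma~\ref{lem di}. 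The main obstacle is the residual term $|\mathcal{T}_{\alpha+m-1}^{\varphi_2}(vu_2)(z)|\rho(z)$: to absorb it into a $\limsup_{|a|\to1}$ quantity one needs $|\varphi_2(z)|$ to be large as well, and this is precisely why $(iii)$ is stated over $\{\min\{|\varphi_1(z)|,|\varphi_2(z)|\}>r\}$. On that set I would invoke \eqref{T2} with $a=\varphi_2(z)$ (so $|a|>r$) to bound the residual term by $\|(D_{\varphi_1,u_1}^m-D_{\varphi_2,u_2}^m)f_{\varphi_2(z)}\|_v+\|(D_{\varphi_1,u_1}^m-D_{\varphi_2,u_2}^m)g_{\varphi_2(z)}\|_v$; combining, taking the supremum over $\min\{|\varphi_1(z)|,|\varphi_2(z)|\}>r$, and letting $r\to1$ yields the claim. (If one of the level sets is eventually empty, the corresponding supremum is $0$ by convention and the inequality holds trivially.)
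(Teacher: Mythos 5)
Your proposal is correct and follows essentially the same route as the paper, which simply remarks that the lemma ``can be deduced from \eqref{T1}--\eqref{T3}'' of Lemma \ref{lemma TFG}; you supply exactly the missing details (substituting $a=\varphi_i(z)$ on the level sets, using $\limsup_{|a|\to1}F(a)=\lim_{r\to1}\sup_{|a|>r}F(a)$, and absorbing the residual term in $(iii)$ via \eqref{T2} on the set where both $|\varphi_1(z)|$ and $|\varphi_2(z)|$ exceed $r$).
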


 \begin{proof} This  results can be deduced from \eqref{T1}--\eqref{T3} in Lemma \ref{lemma TFG}.  \end{proof}

\begin{lemma}\label{lemma limFGN} Let $m\in \mathbb{N}_0,$ $0<\alpha<\infty$ and $v$ be a weight. Suppose $u_1, u_2\in H(\mathbb{D}),$ $\varphi_1, \varphi_2 \in S(\mathbb{D})$. Suppose that $D_{\varphi_1,u_1}^m-D_{\varphi_2,u_2}^m: \mathcal{B}^\alpha\rightarrow H_v^\infty$ is bounded, then the following statements hold,
\begin{eqnarray}&&(i)\;\;\limsup\limits_{|a|\rightarrow 1} \|(D_{\varphi_1,u_1}^m-D_{\varphi_2,u_2}^m) f_{a}\|_v \preceq \limsup\limits_{n\rightarrow \infty}  n^{\alpha+m-1}\|u_1 \varphi_1^{n} -u_2 \varphi_2^n \|_v.\quad\quad\quad\label{DN1}\\&&
(ii)\;\;\limsup\limits_{|a|\rightarrow 1}\|(D_{\varphi_1,u_1}^m-D_{\varphi_2,u_2}^m) g_{a}\|_v \preceq \limsup\limits_{n\rightarrow \infty}  n^{\alpha+m-1}\|u_1 \varphi_1^{n} -u_2 \varphi_2^n \|_v.\label{DN2}\quad\quad\quad\end{eqnarray}
\end{lemma}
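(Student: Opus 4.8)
\textbf{Proof proposal for Lemma \ref{lemma limFGN}.}

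The plan is to revisit the two chains of estimates established in the proof of Lemma \ref{lemma FGN} and show that the $\limsup$ as $|a|\to 1$ of each quantity is controlled by $\limsup_{n\to\infty}n^{\alpha+m-1}\|u_1\varphi_1^n-u_2\varphi_2^n\|_v$ rather than merely the full supremum over $n$. The boundedness hypothesis guarantees (via the already-proven Theorem) that $M:=\sup_{n\in\mathbb{N}_0}n^{\alpha+m-1}\|u_1\varphi_1^n-u_2\varphi_2^n\|_v<\infty$, so the tail behaviour is the only thing left to track. First I would fix $\varepsilon>0$ and choose $N$ so that $n^{\alpha+m-1}\|u_1\varphi_1^n-u_2\varphi_2^n\|_v\le L+\varepsilon$ for all $n\ge N$, where $L$ denotes the right-hand $\limsup$; the finitely many terms with $n<N$ are bounded by $M$ but, crucially, carry a factor $|\bar a|^k$ (or $|\bar a|^{k-1}$) that can be absorbed once $|a|$ is close to $1$, as explained below.

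For part $(i)$, I would start from the identity \eqref{F}, namely
\begin{eqnarray*}
\|(D_{\varphi_1,u_1}^m-D_{\varphi_2,u_2}^m)f_a\|_v\le (1-|a|^2)^\alpha\sum_{k=0}^\infty \frac{\Gamma(k+2\alpha+m-1)}{\Gamma(2\alpha+m-1)k!}|a|^k\|u_1\varphi_1^k-u_2\varphi_2^k\|_v,
\end{eqnarray*}
and split the sum at $k=N$. The head $\sum_{k<N}$ is at most $M\sum_{k<N}\frac{\Gamma(k+2\alpha+m-1)}{\Gamma(2\alpha+m-1)k!}$, a fixed finite constant, so after multiplication by $(1-|a|^2)^\alpha$ it tends to $0$ as $|a|\to1$. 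For the tail $\sum_{k\ge N}$, I replace $\|u_1\varphi_1^k-u_2\varphi_2^k\|_v$ by $(L+\varepsilon)k^{-\alpha-m+1}$ and invoke the asymptotic equivalence \eqref{exp}, $(1-|a|^2)^\alpha\sum_k\frac{\Gamma(k+2\alpha+m-1)}{\Gamma(2\alpha+m-1)k!}k^{-\alpha-m+1}|a|^k\approx (1-|a|^2)^\alpha(1-|a|)^{-\alpha}\preceq 1$, to conclude that the tail is $\preceq L+\varepsilon$ uniformly in $a$. Letting $|a|\to1$ and then $\varepsilon\to0$ gives $(i)$. For part $(ii)$, I would run the same head/tail argument on the second displayed estimate for $g_a$ from the proof of Lemma \ref{lemma FGN}, i.e. on
\begin{eqnarray*}
\|(D_{\varphi_1,u_1}^m-D_{\varphi_2,u_2}^m)g_a\|_v\preceq \|(D_{\varphi_1,u_1}^m-D_{\varphi_2,u_2}^m)f_a\|_v+(1-|a|^2)^{\alpha+1}\sum_{k=1}^\infty k^{2\alpha+m-1}|a|^{k-1}\|u_1\varphi_1^k-u_2\varphi_2^k\|_v,
\end{eqnarray*}
using part $(i)$ for the first term and, for the series, the same splitting together with $\sum_k k^{2\alpha+m-1}k^{-\alpha-m+1}|a|^{k-1}=\sum_k k^{\alpha}|a|^{k-1}\approx (1-|a|)^{-\alpha-1}$, which the prefactor $(1-|a|^2)^{\alpha+1}$ exactly cancels.

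The main obstacle is purely bookkeeping: one must make sure the "head" contributions (the finitely many low-index terms, estimated only by the crude bound $M$) genuinely vanish in the limit, which they do because each such term is a \emph{fixed} constant multiplied by $(1-|a|^2)^\alpha$ or $(1-|a|^2)^{\alpha+1}$, and because the Stirling/Stolz asymptotics used for the tail are insensitive to discarding finitely many terms. No new analytic input beyond \eqref{F}, \eqref{exp}, \eqref{Dga} and Stirling's formula is required; the argument is a routine upgrade of Lemma \ref{lemma FGN} from $\sup_n$ to $\limsup_n$, and I would simply remark that "the proof parallels that of Lemma \ref{lemma FGN}, replacing the global bound by the tail bound $L+\varepsilon$ and absorbing the finitely many remaining terms into the factor $(1-|a|^2)^\alpha\to0$."
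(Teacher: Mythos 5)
Your proposal is correct and follows essentially the same route as the paper: split the series in \eqref{F} (and in \eqref{Dga}) at a cutoff $N$, kill the finitely many head terms using the prefactor $(1-|a|^2)^{\alpha}$ (resp.\ $(1-|a|^2)^{\alpha+1}$) together with the finiteness supplied by the boundedness hypothesis, and bound the tail via \eqref{exp} by the tail supremum $\sup_{n\ge N+1}n^{\alpha+m-1}\|u_1\varphi_1^n-u_2\varphi_2^n\|_v$, which converges to the desired $\limsup$. Your $\varepsilon$--$N$ phrasing of the tail bound is only a cosmetic variant of the paper's argument.
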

\begin{proof}For any $a\in \mathbb{D}$ and each positive integer $N,$ employing \eqref{F}   we obtain
\begin{eqnarray*}&&\|(D_{\varphi_1,u_1}^m-D_{\varphi_2,u_2}^m) f_{a}\|_v\nonumber\\&&\leq (1-|a|^2)^\alpha\sum_{k=0}^\infty \frac{\Gamma(k+2\alpha+m-1)}{\Gamma(2\alpha+m-1) k!}|\bar{a} |^k   \|u_1\varphi_1^{k}-u_2\varphi_2^k\|_v \nonumber\\&&\leq (1-|a|^2)^\alpha\sum_{k=0}^N \frac{\Gamma(k+2\alpha+m-1)}{\Gamma(2\alpha+m-1) k!}|\bar{a} |^k   \|u_1\varphi_1^{k}-u_2\varphi_2^k\|_v\nonumber\\&&+ (1-|a|^2)^\alpha\sum_{k=N+1}^\infty \frac{\Gamma(k+2\alpha+m-1)}{\Gamma(2\alpha+m-1) k!}|\bar{a} |^k   \|u_1\varphi_1^{k}-u_2\varphi_2^k\|_v.
\end{eqnarray*} We denote \begin{eqnarray} &&J_1:= (1-|a|^2)^\alpha\sum_{k=0}^N \frac{\Gamma(k+2\alpha+m-1)}{\Gamma(2\alpha+m-1) k!}|\bar{a} |^k   \|u_1\varphi_1^{k}-u_2\varphi_2^k\|_v;\nonumber\\&& J_2:= (1-|a|^2)^\alpha\sum_{k=N+1}^\infty \frac{\Gamma(k+2\alpha+m-1)}{\Gamma(2\alpha+m-1) k!}|\bar{a} |^k   \|u_1\varphi_1^{k}-u_2\varphi_2^k\|_v.\nonumber\end{eqnarray}
For $k\in\{0,\cdots, N\}$, we can choose $z^{k+m}\in \mathcal{B}^\alpha.$ Using the boundedness of $D_{\varphi_1,u_1}^m-D_{\varphi_2,u_2}^m: \mathcal{B}^\alpha\rightarrow H_v^\infty$, it turns out that $\|u_1\varphi_1^{k}-u_2\varphi_2^k\|_v <\infty.$ Hence $\limsup\limits_{|a|\rightarrow 1} J_1=0.$ On the other hand, it follows from \eqref{exp} that \begin{eqnarray*} J_2&=& (1-|a|^2)^\alpha\sum_{k=N+1}^\infty \frac{\Gamma(k+2\alpha+m-1)}{\Gamma(2\alpha+m-1) k!}|\bar{a} |^k   \|u_1\varphi_1^{k}-u_2\varphi_2^k\|_v\nonumber\\&\leq&(1-|a|^2)^\alpha\sum_{k=N+1}^\infty \frac{\Gamma(k+2\alpha+m-1)}{\Gamma(2\alpha+m-1) k!}|\bar{a} |^k k^{-\alpha-m+1}   \sup\limits_{n\geq N+1 }n^{\alpha+m-1}\|u_1\varphi_1^{n}-u_2\varphi_2^n\|_v\nonumber\\&\preceq& \sup\limits_{n\geq N+1 }n^{\alpha+m-1}\|u_1\varphi_1^{n}-u_2\varphi_2^n\|_v. \end{eqnarray*} Furthermore, letting $|a|\rightarrow 1$, it leads to
\begin{eqnarray*}\limsup\limits_{|a|\rightarrow 1}  J_2  \preceq  \sup\limits_{n\geq N+1 }n^{\alpha+m-1}\|u_1\varphi_1^{n}-u_2\varphi_2^n\|_v. \end{eqnarray*} The above inequalities imply that  \eqref{DN1} is true. Similarly, by \eqref{Dga}
\begin{eqnarray*} &&\|(D_{\varphi_1,u_1}^m-D_{\varphi_2,u_2}^m)g_a\|_{v}\preceq \|(D_{\varphi_1,u_1}^m-D_{\varphi_2,u_2}^m)f_a\|_{v}\nonumber\\&&+
 (1-|a|^2)^{\alpha+1} \sum_{k=1}^\infty k^{2\alpha+m-1}|\bar{a}|^{k-1}  \|u_1 \varphi_1^{k} -u_2 \varphi_2^k\|_v\nonumber\\&& \leq \|(D_{\varphi_1,u_1}^m-D_{\varphi_2,u_2}^m)f_a\|_{v}\nonumber\\&&+
 (1-|a|^2)^{\alpha+1} \sum_{k=1}^N k^{2\alpha+m-1}|\bar{a}|^{k-1}  \|u_1\varphi_1^{k}-u_2\varphi_2^k\|_v\nonumber\\&& +
 (1-|a|^2)^{\alpha+1} \sum_{k=N+1}^\infty k^{2\alpha+m-1}|\bar{a}|^{k-1}  \|u_1\varphi_1^{k}-u_2\varphi_2^k\|_v \nonumber\\&& \leq \|(D_{\varphi_1,u_1}^m-D_{\varphi_2,u_2}^m)f_a\|_{v}\nonumber\\&&+
 (1-|a|^2)^{\alpha+1} \sum_{k=1}^N k^{2\alpha+m-1}|\bar{a}|^{k-1}  \|u_1\varphi_1^{k}-u_2\varphi_2^k\|_v\nonumber\\&& +
 (1-|a|^2)^{\alpha+1} \sum_{k=N+1}^\infty k^{2\alpha+m-1}|\bar{a}|^{k-1} k^{-\alpha-m+1} \sup\limits_{n\geq N+1}n^{\alpha+m-1} \|u_1\varphi_1^n-u_2\varphi_2^n\|_v\nonumber\\&& \preceq \|(D_{\varphi_1,u_1}^m-D_{\varphi_2,u_2}^m)f_a\|_{v}\nonumber\\&&+
 (1-|a|^2)^{\alpha+1} \sum_{k=1}^N k^{2\alpha+m-1}|\bar{a}|^{k-1}  \|u_1\varphi_1^{k}-u_2\varphi_2^k\|_v\nonumber\\&& +
  \sup\limits_{n\geq N+1}n^{\alpha+m-1} \|u_1\varphi_1^n-u_2\varphi_2^n\|_v.\end{eqnarray*}
Letting $|a|\rightarrow 1$ in the above display, we get that
\begin{eqnarray*} &&\limsup\limits_{|a|\rightarrow 1}\|(D_{\varphi_1,u_1}^m-D_{\varphi_2,u_2}^m)g_a\|_{v}\nonumber\\&&
\preceq \limsup\limits_{|a|\rightarrow 1} \|(D_{\varphi_1,u_1}^m-D_{\varphi_2,u_2}^m)f_a\|_{v}
+\sup\limits_{n\geq N+1}n^{\alpha+m-1} \|u_1\varphi_1^n-u_2\varphi_2^n\|_v.\end{eqnarray*} The above inequality together with \eqref{DN1} verify \eqref{DN2}. This ends the proof.
\end{proof}
Here we will use the similar  methods in \cite[Section 4]{LZ1}, we let $K_r f(z)=f(rz)$ for $r\in (0,1).$ And then $K_r$ is a compact operator on the Bloch type space $\mathcal{B}^\alpha$ or the little Bloch type space $\mathcal{B}^\alpha_0$ for $\alpha>0$ with $\|K_r\|\leq 1.$ Here we combine the cases for $0<\alpha<1, $ $\alpha=1$ and $\alpha>1$ for Bloch type space.
\begin{lemma}\cite[Lemma 4.1-4.3]{Zhao} \label{lemma Ln} Let $0<\alpha<\infty.$ Then there is a sequence $\{r_k\},$ with $0<r_k<1$ tending to $1$, such that the compact operator
$$L_n=\frac{1}{n} \sum_{k=1}^n K_{r_k}$$ on $\mathcal{B}_0^\alpha$ satisfies $\lim\limits_{n\rightarrow \infty} \sup\limits_{\|f\|_{\mathcal{B}^\alpha}\leq 1}\sup\limits_{|z|\leq t}|((I-L_n)f)'(z)|=0$ for any $t\in[0,1).$ Furthermore, this statement  holds as well for the sequence of biadjoints $L_n^{**}$ on
$\mathcal{B}^\alpha$.
 \end{lemma}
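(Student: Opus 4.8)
The plan is to realise the operators $L_n$ as Ces\`aro means of the dilations $K_rf(z)=f(rz)$ and to reduce both assertions to a single elementary normal-families estimate. Since $(K_rf)'(z)=rf'(rz)$, and since, as recorded just before the statement, each $K_r$ is compact on $\mathcal{B}^\alpha$ and on $\mathcal{B}_0^\alpha$ with $\|K_r\|\le1$, for any sequence $r_k\in(0,1)$ the average $L_n:=\frac1n\sum_{k=1}^nK_{r_k}$ is a finite sum of compact operators, hence compact, with $\|L_n\|\le1$. Using the well known bidual identification $(\mathcal{B}_0^\alpha)^{**}=\mathcal{B}^\alpha$ (valid for every $\alpha>0$, and exactly what makes the phrase ``$L_n^{**}$ on $\mathcal{B}^\alpha$'' meaningful), compactness of $L_n$ on $\mathcal{B}_0^\alpha$ yields compactness of $L_n^{**}$ on $\mathcal{B}^\alpha$. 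Thus the real content is the two uniform-convergence statements; I would prove the one for $L_n^{**}$ on the unit ball of $\mathcal{B}^\alpha$, from which the one for $L_n$ on $\mathcal{B}_0^\alpha$ is immediate because $\mathcal{B}_0^\alpha\subset\mathcal{B}^\alpha$ and $L_n^{**}$ restricts to $L_n$ there.

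The analytic heart is the claim that for each fixed $t\in[0,1)$,
\begin{equation*}
\omega_t(r):=\sup_{\|f\|_{\mathcal{B}^\alpha}\le1}\ \sup_{|z|\le t}\bigl|f'(z)-rf'(rz)\bigr|\ \longrightarrow\ 0\qquad(r\to1^-).
\end{equation*}
The supremum is finite because $(1-|z|^2)^\alpha|f'(z)|\le\|f\|_{\mathcal{B}^\alpha}$ forces $|f'|\le(1-t^2)^{-\alpha}$ on $|z|\le t$. For the limit I would argue by contradiction: a failure produces $\varepsilon>0$, $r_j\to1$, functions $f_j$ with $\|f_j\|_{\mathcal{B}^\alpha}\le1$, and points $z_j$ with $|z_j|\le t$ and $|f_j'(z_j)-r_jf_j'(r_jz_j)|\ge\varepsilon$. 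The unit ball of $\mathcal{B}^\alpha$ is locally bounded, hence a normal family, so after passing to a subsequence $f_j\to f$ locally uniformly on $\mathbb{D}$, whence $f_j'\to f'$ uniformly on $|z|\le t$; after a further subsequence, $z_j\to z_0$ with $|z_0|\le t$, and then $r_jz_j\to z_0$ as well. Since $z_j$ and $r_jz_j$ both lie in $\{|z|\le t\}$, uniform convergence of $f_j'$ there together with continuity of $f'$ gives $f_j'(z_j)\to f'(z_0)$ and $f_j'(r_jz_j)\to f'(z_0)$, so $f_j'(z_j)-r_jf_j'(r_jz_j)\to0$, contradicting $\ge\varepsilon$.

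Granting the estimate, I would fix any $r_k\uparrow1$ and set $L_n=\frac1n\sum_{k=1}^nK_{r_k}$. One identifies $L_n^{**}$, acting on $\mathcal{B}^\alpha=(\mathcal{B}_0^\alpha)^{**}$, with $f\mapsto\frac1n\sum_{k=1}^nf(r_k\,\cdot)$ by a standard weak-$*$ argument: $L_n^{**}$ is weak-$*$-to-weak-$*$ continuous and coincides with $L_n$ on $\mathcal{B}_0^\alpha$, each dilation $K_{r_k}$ sends bounded weak-$*$ convergent nets in $\mathcal{B}^\alpha$ to bounded weak-$*$ convergent nets (on bounded sets, weak-$*$ convergence in $\mathcal{B}^\alpha$ is uniform convergence on compact subsets of $\mathbb{D}$, which is preserved by composition with a dilation), and $\mathcal{B}_0^\alpha$ is weak-$*$ dense in $\mathcal{B}^\alpha$ by Goldstine. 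With this identification, for $f$ in the unit ball of $\mathcal{B}^\alpha$,
\begin{equation*}
\sup_{|z|\le t}\bigl|\bigl((I-L_n^{**})f\bigr)'(z)\bigr|=\sup_{|z|\le t}\Bigl|\tfrac1n\textstyle\sum_{k=1}^n\bigl(f'(z)-r_kf'(r_kz)\bigr)\Bigr|\le\tfrac1n\sum_{k=1}^n\omega_t(r_k)\ \longrightarrow\ 0,
\end{equation*}
since a Ces\`aro mean of a null sequence is null (and the same bound restricted to $\mathcal{B}_0^\alpha$ handles $L_n$); in particular no special choice of $\{r_k\}$ is needed, any sequence increasing to $1$ works. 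The main obstacle is not analytic — the estimate for $\omega_t(r)$ is routine Montel theory — but organisational: one must pin down $(\mathcal{B}_0^\alpha)^{**}=\mathcal{B}^\alpha$ and verify carefully, via the weak-$*$ topology, that the biadjoint $L_n^{**}$ really is the dilation average on that bidual.
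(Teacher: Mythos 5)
Your proof is correct, but it is worth pointing out that the paper itself offers no argument for this lemma: it is imported wholesale from Zhao \cite[Lemmas 4.1--4.3]{Zhao}, where the three cases $0<\alpha<1$, $\alpha=1$ and $\alpha>1$ are handled by separate lemmas and the sequence $\{r_k\}$ is chosen recursively because those lemmas also establish norm-type estimates for $I-L_n$ near the boundary that are not quoted (and not needed) here. For the statement as actually stated, your route is more elementary and more general: the normal-families argument shows $\omega_t(r)\to 0$ as $r\to 1^-$ for every fixed $t$ (one can even get the quantitative bound $\omega_t(r)\preceq_t (1-r)$ by writing $f'(z)-rf'(rz)=\bigl(f'(z)-f'(rz)\bigr)+(1-r)f'(rz)$ and applying a Cauchy estimate to $f''$ on $|z|\le t$), and then the Ces\`aro-mean step gives the conclusion for \emph{any} sequence $r_k\uparrow 1$, uniformly in $\alpha$ — which in particular explains why the existential phrasing of the lemma is harmless. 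The two points you rightly flag as needing care are exactly the ones a referee would press on: the identification $(\mathcal{B}_0^\alpha)^{**}\cong\mathcal{B}^\alpha$ (a nontrivial but standard duality fact, implicitly assumed by the paper the moment it writes ``$L_n^{**}$ on $\mathcal{B}^\alpha$''), and the verification that $L_n^{**}$ is the concrete dilation average $f\mapsto \frac{1}{n}\sum_{k=1}^n f(r_k\cdot)$. Your weak-$*$ density argument for the latter is the standard one and is sound, though in a full write-up you should note that passing from weak-$*$-to-weak-$*$ continuity on bounded sets to weak-$*$ continuity on all of $\mathcal{B}^\alpha$ uses Krein--\v{S}mulian/Banach--Dieudonn\'e; alternatively, one can avoid the identification entirely by simply \emph{defining} the operators on $\mathcal{B}^\alpha$ by the dilation formula and checking compactness directly, which is all the subsequent essential-norm argument actually uses. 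No gaps.
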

The following is our main theorem in this section.
\begin{theorem} Let $m\in \mathbb{N}_0,$ $0<\alpha<\infty$ and $v$ be a weight. Suppose $u_1, u_2\in H(\mathbb{D}),$ $\varphi_1, \varphi_2 \in S(\mathbb{D})$. Suppose that $D_{\varphi_i,u_i}^m : \mathcal{B}^\alpha\rightarrow H_v^\infty$ is bounded for $i=1,2$, then  the following equivalences hold, \begin{eqnarray*}&&\|D_{\varphi_1,u_1}^m-D_{\varphi_2,u_2}^m\|_{e, \mathcal{B}^\alpha\rightarrow H_v^\infty}\nonumber\\&\approx& \lim\limits_{r\rightarrow 1}\sup\limits_{|\varphi_1(z)|>r}\left|\mathcal{T}_{\alpha+m-1}^{\varphi_1}(vu_1)(z)\right| \rho(z) \nonumber\\&&+\lim\limits_{r\rightarrow 1}\sup\limits_{|\varphi_2(z)|>r}\left|\mathcal{T}_{\alpha+m-1}^{\varphi_2}(vu_2)(z)\right| \rho(z) \nonumber\\&&+ \lim\limits_{r\rightarrow 1}\sup\limits_{\min\{|\varphi_1(z)|,|\varphi_2(z)|\}>r}\left|\mathcal{T}_{\alpha+m-1}^{\varphi_1}(vu_1)(z)-\mathcal{T}_{\alpha+m-1}^{\varphi_2}
(vu_2)(z)\right| \nonumber\\&\approx& \limsup\limits_{|a|\rightarrow 1}\|(D_{\varphi_1,u_1}^m-D_{\varphi_2,u_2}^m) f_{a}\|_v+\limsup\limits_{|a|\rightarrow 1}\|(D_{\varphi_1,u_1}^m-D_{\varphi_2,u_2}^m) g_{a}\|_v \nonumber\\&\approx& \limsup\limits_{n\rightarrow \infty}  n^{\alpha+m-1}\|u_1 \varphi_1^{n} -u_2 \varphi_2^n \|_v.  \end{eqnarray*}\end{theorem}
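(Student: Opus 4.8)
The plan is to establish the chain of equivalences by proving a lower bound and an upper bound for the essential norm, using the test functions $f_a,g_a$ as the bridge between the operator-theoretic quantity and the $n$-th power quantity $\sup_n n^{\alpha+m-1}\|u_1\varphi_1^n-u_2\varphi_2^n\|_v$. First I would note that the last two $\approx$ signs, read as an \emph{upper} estimate, follow almost verbatim from the bounded case: Lemma \ref{lemma limFGN} gives
$\limsup_{|a|\to1}(\|(D_{\varphi_1,u_1}^m-D_{\varphi_2,u_2}^m)f_a\|_v+\|(D_{\varphi_1,u_1}^m-D_{\varphi_2,u_2}^m)g_a\|_v)\preceq \limsup_{n\to\infty}n^{\alpha+m-1}\|u_1\varphi_1^n-u_2\varphi_2^n\|_v$, and Lemma \ref{lem TGN} passes from the $f_a,g_a$ quantities to the three $\lim_{r\to1}\sup$ boundary quantities. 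So the real work is to (a) bound the essential norm \emph{below} by the boundary $\mathcal{T}$-quantities, and (b) bound the essential norm \emph{above} by $\limsup_n n^{\alpha+m-1}\|u_1\varphi_1^n-u_2\varphi_2^n\|_v$, after which everything closes into a cycle.

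For the lower bound (a), I would take any compact $K:\mathcal{B}^\alpha\to H_v^\infty$ and test it against the normalized functions $f_a/\|f_a\|_{\mathcal{B}^\alpha}$ and $g_a/\|g_a\|_{\mathcal{B}^\alpha}$ with $a=\varphi_1(z)$ or $a=\varphi_2(z)$ and $|a|\to1$. Since these families are bounded in $\mathcal{B}^\alpha$ and tend to $0$ uniformly on compact subsets of $\mathbb{D}$ as $|a|\to1$, compactness forces $\|K(f_a/\|f_a\|)\|_v\to0$, hence $\|D_{\varphi_1,u_1}^m-D_{\varphi_2,u_2}^m-K\|\succeq \limsup_{|a|\to1}\|(D_{\varphi_1,u_1}^m-D_{\varphi_2,u_2}^m)f_a\|_v$ and similarly for $g_a$; taking the infimum over $K$ gives $\|D_{\varphi_1,u_1}^m-D_{\varphi_2,u_2}^m\|_e\succeq\limsup_{|a|\to1}(\|\cdots f_a\|_v+\|\cdots g_a\|_v)$. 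Then the inequalities \eqref{T1}--\eqref{T3}, localized to $|\varphi_i(z)|>r$ exactly as in Lemma \ref{lem TGN}, convert this into a lower bound by the three boundary $\mathcal{T}$-quantities. This part is routine; the standard subtlety is checking $\|f_a\|_{\mathcal{B}^\alpha}\asymp 1$ uniformly, which is already recorded above.

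For the upper bound (b) — which I expect to be the main obstacle — I would use the compact operators $L_n^{**}$ from Lemma \ref{lemma Ln}. Write $D_{\varphi_1,u_1}^m-D_{\varphi_2,u_2}^m=(D_{\varphi_1,u_1}^m-D_{\varphi_2,u_2}^m)L_n^{**}+(D_{\varphi_1,u_1}^m-D_{\varphi_2,u_2}^m)(I-L_n^{**})$; the first term is compact, so $\|D_{\varphi_1,u_1}^m-D_{\varphi_2,u_2}^m\|_e\leq\limsup_n\|(D_{\varphi_1,u_1}^m-D_{\varphi_2,u_2}^m)(I-L_n^{**})\|$. For $f$ in the unit ball of $\mathcal{B}^\alpha$, I would split the supremum defining $\|(D_{\varphi_1,u_1}^m-D_{\varphi_2,u_2}^m)(I-L_n^{**})f\|_v$ over $\{|\varphi_1(z)|\le r\ \text{and}\ |\varphi_2(z)|\le r\}$ versus its complement. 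On the compact part, $((I-L_n^{**})f)^{(m)}$ is small uniformly by Lemma \ref{lemma Ln} (after passing from $(I-L_n^{**})f)'$ to higher derivatives via Cauchy estimates on a slightly larger disk), while $v u_i$ is bounded there by the assumed boundedness of each $D_{\varphi_i,u_i}^m$; so this contribution $\to0$ as $n\to\infty$. On the complementary part, I would repeat the algebraic decomposition of \eqref{TRHO1}--\eqref{TRHO2} — adding and subtracting $\mathcal{T}_{\alpha+m-1}^{\varphi_2}(vu_2)(z)(1-|\varphi_i|^2)^{\alpha+m-1}((I-L_n^{**})f)^{(m)}(\varphi_i(z))$ — and apply Lemma \ref{lem di} to the function $(I-L_n^{**})f$ (whose $\mathcal{B}^\alpha$-norm is $\le\|I-L_n^{**}\|\le 2$), bounding everything by the boundary $\mathcal{T}$-quantities restricted to $|\varphi_i(z)|>r$; letting $r\to1$ afterwards yields $\|D_{\varphi_1,u_1}^m-D_{\varphi_2,u_2}^m\|_e\preceq$ the three $\lim_{r\to1}\sup$ terms. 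Combining (a), (b), Lemma \ref{lem TGN} and Lemma \ref{lemma limFGN} closes the loop of all three $\approx$. The delicate point in (b) is handling the boundary region where only \emph{one} of $|\varphi_1(z)|,|\varphi_2(z)|$ is large: there one uses that $|\mathcal{T}_{\alpha+m-1}^{\varphi_i}(vu_i)(z)|\rho(z)$ and the difference $|\mathcal{T}_{\alpha+m-1}^{\varphi_1}(vu_1)(z)-\mathcal{T}_{\alpha+m-1}^{\varphi_2}(vu_2)(z)|$ together dominate both individual $\mathcal{T}$-terms (since if say $|\varphi_2(z)|\le r<1$ then $(1-|\varphi_2(z)|^2)^{\alpha+m-1}$ is bounded below and $\|D_{\varphi_2,u_2}^m\|$ controls $vu_2$), so the single-index boundary behavior is already absorbed — this bookkeeping is exactly what makes the three-term right-hand side the correct quantity and is the part I would write out most carefully.
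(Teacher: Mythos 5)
There is a genuine gap: your chain of inequalities never closes around the quantity $\limsup_{n\rightarrow \infty} n^{\alpha+m-1}\|u_1\varphi_1^{n}-u_2\varphi_2^{n}\|_v$. What you actually establish is (a) $\|D_{\varphi_1,u_1}^m-D_{\varphi_2,u_2}^m\|_{e}\succeq$ the $f_a,g_a$ quantities $\succeq$ the boundary $\mathcal{T}$-quantities, and (b) $\|D_{\varphi_1,u_1}^m-D_{\varphi_2,u_2}^m\|_{e}\preceq$ the boundary $\mathcal{T}$-quantities (your announced goal for (b), an upper bound by the $n$-th power quantity, is not what your sketch of (b) delivers, and would not help anyway). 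Together with Lemmas \ref{lem TGN} and \ref{lemma limFGN} this proves that the essential norm is comparable to the boundary quantities and to the $f_a,g_a$ quantities, but for the $n$-th power quantity you only ever have the one-sided estimate ``$f_a,g_a$ quantities $\preceq \limsup_n n^{\alpha+m-1}\|u_1\varphi_1^n-u_2\varphi_2^n\|_v$'' from Lemma \ref{lemma limFGN}, i.e.\ the $n$-th power quantity sits at the top of the chain and is never bounded \emph{above} by anything. The missing ingredient is the lower estimate
\begin{equation*}
\limsup_{n\rightarrow \infty} n^{\alpha+m-1}\|u_1\varphi_1^{n}-u_2\varphi_2^{n}\|_v \preceq \|D_{\varphi_1,u_1}^m-D_{\varphi_2,u_2}^m\|_{e,\mathcal{B}^\alpha\rightarrow H_v^\infty},
\end{equation*}
which the paper obtains by testing $D_{\varphi_1,u_1}^m-D_{\varphi_2,u_2}^m-K$ against the normalized monomials $z^n/\|z^n\|_{\mathcal{B}^\alpha}$ (a weakly null sequence with $\|z^n\|_{\mathcal{B}^\alpha}\approx n^{1-\alpha}$) and the computation \eqref{zn}; your $f_a,g_a$ test families cannot produce this inequality because they only detect the boundary $\mathcal{T}$-quantities. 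With that one extra step the cycle $N\preceq E\preceq B\preceq F\preceq N$ closes, and your (a) becomes redundant.

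Apart from this, your upper bound (b) is essentially the paper's argument: the compact approximants $L_n^{**}$ of Lemma \ref{lemma Ln}, the four-region split according to whether $|\varphi_1(z)|$ and $|\varphi_2(z)|$ exceed $r$, the decompositions underlying \eqref{TRHO1}--\eqref{TRHO2} applied to $(I-L_n^{**})f$ together with Lemma \ref{lem di}, and the observation that on the mixed regions the term involving the derivative at the ``small'' point $\varphi_i(z)$ vanishes as $n\to\infty$ while the $\mathcal{T}$-difference stays bounded. That part of your plan is sound and matches the paper.
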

\begin{proof} Firstly, the boundedness of  $D_{\varphi_i,u_i}^m : \mathcal{B}^\alpha\rightarrow H_v^\infty$ implies that $M_i=\sup\limits_{z\in \mathbb{D}}v(z)|u_i(z)|<\infty$ for $i=1,2.$ Lemma \ref{lemma FGN} together with Lemma \ref{lem TGN} ensure that
\begin{eqnarray*}&& \lim\limits_{r\rightarrow 1}\sup\limits_{|\varphi_1(z)|>r}\left|\mathcal{T}_{\alpha+m-1}^{\varphi_1}(vu_1)(z)\right| \rho(z) +\lim\limits_{r\rightarrow 1}\sup\limits_{|\varphi_2(z)|>r}\left|\mathcal{T}_{\alpha+m-1}^{\varphi_2}(vu_2)(z)\right| \rho(z) \nonumber\\&&+\lim\limits_{r\rightarrow 1}\sup\limits_{\min\{|\varphi_1(z)|,|\varphi_2(z)|\}>r}\left|\mathcal{T}_{\alpha+m-1}^{\varphi_1}(vu_1)(z)-\mathcal{T}_{\alpha+m-1}^{\varphi_2}
(vu_2)(z)\right|\nonumber\\&& \preceq \limsup\limits_{|a|\rightarrow 1}\|(D_{\varphi_1,u_1}^m-D_{\varphi_2,u_2}^m) f_a\|_v +\limsup\limits_{|a|\rightarrow 1}\|(D_{\varphi_1,u_1}^m-D_{\varphi_2,u_2}^m) g_a\|_v\nonumber\\&&\preceq  \limsup\limits_{n\rightarrow \infty}  n^{\alpha+m-1}\|u_1 \varphi_1^{n} -u_2 \varphi_2^n \|_v.  \end{eqnarray*}
In the following, we only need to show \begin{eqnarray*}&& \limsup\limits_{n\rightarrow \infty}  n^{\alpha+m-1}\|u_1 \varphi_1^{n} -u_2 \varphi_2^n \|_v\preceq \|D_{\varphi_1,u_1}^m-D_{\varphi_2,u_2}^m\|_{e, \mathcal{B}^\alpha\rightarrow H_v^\infty}\nonumber\\&&\preceq \lim\limits_{r\rightarrow 1}\sup\limits_{|\varphi_1(z)|>r}\left|\mathcal{T}_{\alpha+m-1}^{\varphi_1}(vu_1)(z)\right| \rho(z) +\lim\limits_{r\rightarrow 1}\sup\limits_{|\varphi_2(z)|>r}\left|\mathcal{T}_{\alpha+m-1}^{\varphi_2}(vu_2)(z)\right| \rho(z) \nonumber\\&&+\lim\limits_{r\rightarrow 1}\sup\limits_{\min\{|\varphi_1(z)|,|\varphi_2(z)|\}>r}\left|\mathcal{T}_{\alpha+m-1}^{\varphi_1}(vu_1)(z)-\mathcal{T}_{\alpha+m-1}^{\varphi_2}
(vu_2)(z)\right|.  \end{eqnarray*} The first inequality follows from the fact: choose a sequence $f_n(z)= z^n/\|z^n\|_{\mathcal{B}^\alpha},$ which converges to $0$ in $\mathcal{B}^\alpha$ with $\|f_n\|_{\mathcal{B}^\alpha}=1.$  For any compact operator $K: \mathcal{B}^\alpha\rightarrow H_v^\infty$, it yields that $\lim\limits_{n\rightarrow \infty}\|Kf_n\|_v=0.$ Furthermore, we deduce that
\begin{eqnarray*}&& \|D_{\varphi_1,u_1}^m-D_{\varphi_2,u_2}^m\|_{e, \mathcal{B}^\alpha\rightarrow H_v^\infty} \nonumber\\&&\succeq  \limsup\limits_{n\rightarrow \infty} \inf_{K}\|(D_{\varphi_1,u_1}^m-D_{\varphi_2,u_2}^m-K) f_n\|_v\nonumber\\&& \geq \limsup\limits_{n\rightarrow \infty} \inf_{K}\left(\|(D_{\varphi_1,u_1}^m-D_{\varphi_2,u_2}^m) f_n\|_v-\|Kf_n\|_v\right)\nonumber\\&& \geq \limsup\limits_{n\rightarrow \infty} \|(D_{\varphi_1,u_1}^m-D_{\varphi_2,u_2}^m) f_n\|_v\nonumber\\&&\succeq \limsup\limits_{n\rightarrow \infty}  n^{\alpha+m-1}\|u_1 \varphi_1^{n-m}-u_2\varphi_2^{n-m}\|_v,\end{eqnarray*} the last inequality follows from \eqref{zn}.

Now we turn our attention to the second inequality. Let $\{L_n\}$ be the sequence of operators given in Lemma \ref{lemma Ln}. Since $L_n^{**}$ is compact on $\mathcal{B}^\alpha$ and $D_{\varphi_1,u_1}^m-D_{\varphi_2,u_2}^m$ is bounded from $\mathcal{B}^\alpha$ to $ H_v^\infty,$ the operator $(D_{\varphi_1,u_1}^m-D_{\varphi_2,u_2}^m)L_n^{**} : \mathcal{B}^\alpha\rightarrow H_v^\infty$ is also compact. Therefore, it follows that
\begin{eqnarray*}&& \|D_{\varphi_1,u_1}^m-D_{\varphi_2,u_2}^m\|_{e, \mathcal{B}^\alpha\rightarrow H_v^\infty} \nonumber\\&&\preceq \limsup\limits_{n\rightarrow \infty} \|(D_{\varphi_1,u_1}^m-D_{\varphi_2,u_2}^m)-(D_{\varphi_1,u_1}^m-D_{\varphi_2,u_2}^m) L_n^{**}\|_{ \mathcal{B}^\alpha\rightarrow H_v^\infty}\nonumber\\&& =\limsup\limits_{n\rightarrow \infty} \|(D_{\varphi_1,u_1}^m-D_{\varphi_2,u_2}^m)(I- L_n^{**})\|_{\mathcal{B}^\alpha\rightarrow H_v^\infty}\nonumber\\&& =\limsup\limits_{n\rightarrow \infty} \sup\limits_{\|f\|_{\mathcal{B}^\alpha}\leq 1} \|(D_{\varphi_1,u_1}^m-D_{\varphi_2,u_2}^m)(I- L_n^{**})f\|_v\nonumber\\&&= \limsup\limits_{n\rightarrow \infty} \sup\limits_{\|f\|_{\mathcal{B}^\alpha}\leq 1} \sup\limits_{z\in \mathbb{D}}v(z)\left|u_1(z)[(I- L_n^{**})f]^{(m)}(\varphi_1(z))-u_2(z)[(I- L_n^{**})f]^{(m)}(\varphi_2(z))\right|.
 \end{eqnarray*}
For an arbitrary $r\in (0,1)$, we denote \begin{eqnarray*}&&\mathbb{D}_1=\{z\in \mathbb{D}:\;|\varphi_1(z)|\leq r,\;|\varphi_2(z)|\leq r\},\; \mathbb{D}_2=\{z\in \mathbb{D}:\;|\varphi_1(z)|\leq r, \;|\varphi_2(z)|>r\},\\&& \mathbb{D}_3=\{z\in \mathbb{D}:|\varphi_1(z)|>r,\;|\varphi_2(z)|\leq r\},\;\mathbb{D}_4=\{z\in \mathbb{D}:\;|\varphi_1(z)|>r, \;|\varphi_2(z)|>r\};\\&& I_i:=\sup\limits_{z\in \mathbb{D}_i}v(z)\left|u_1(z)[(I- L_n^{**})f]^{(m)}(\varphi_1(z))-u_2(z)[(I- L_n^{**})f]^{(m)}(\varphi_2(z))\right|,
 \end{eqnarray*} $\;\mbox{for}\;i=1,2,3,4.$ Then  Cauchy's integral formula and Lemma \ref{lemma Ln} imply that
 \begin{eqnarray}&& \limsup\limits_{n\rightarrow \infty} \sup\limits_{\|f\|_{\mathcal{B}^\alpha}\leq 1}I_1\nonumber\\&&\leq
 \limsup\limits_{n\rightarrow \infty} \sup\limits_{\|f\|_{\mathcal{B}^\alpha}\leq 1} \sup_{|\varphi_1(z)|\leq r}(v(z)|u_1(z)|)|[(I-L_n^{**})f]^{(m)}(\varphi_1(z))|\nonumber\\&& + \limsup\limits_{n\rightarrow \infty} \sup\limits_{\|f\|_{\mathcal{B}^\alpha}\leq 1}  \sup_{|\varphi_2(z)|\leq r}(v(z)|u_2(z)|)|[(I-L_n^{**})f]^{(m)}(\varphi_2(z))|\nonumber\\&& \leq M_1
 \limsup\limits_{n\rightarrow \infty} \sup\limits_{\|f\|_{\mathcal{B}^\alpha}\leq 1} \sup_{|\varphi_1(z)|\leq r} |[(I-L_n^{**})f]^{(m)}(\varphi_1(z))|\nonumber\\&& + M_2 \limsup\limits_{n\rightarrow \infty} \sup\limits_{\|f\|_{\mathcal{B}^\alpha}\leq 1}  \sup_{|\varphi_2(z)|\leq r} |[(I-L_n^{**})f]^{(m)}(\varphi_2(z))|
 =0. \label{I1}\end{eqnarray} On the other hand, we formulate that
 \begin{eqnarray} &&v(z)\left|u_1(z)[(I- L_n^{**})f]^{(m)}(\varphi_1(z))-u_2(z)[(I- L_n^{**})f]^{(m)}(\varphi_2(z))\right|\nonumber\\&&\preceq \frac{v(z)|u_1(z)|}{(1-|\varphi_1(z)|^2)^{\alpha+m-1}}|(1-|\varphi_1(z)|^2)^{\alpha+m-1}[(I- L_n^{**})f]^{(m)}(\varphi_1(z))\nonumber\\&&-(1-|\varphi_2(z)|^2)^{\alpha+m-1}[(I- L_n^{**})f]^{(m)}(\varphi_2(z))|\nonumber\\&&+(1-|\varphi_2(z)|^2)^{\alpha+m-1}\left|[(I- L_n^{**})f]^{(m)}(\varphi_2(z))\right| \left|\mathcal{T}_{\alpha+m-1}^{\varphi_1}(vu_1)(z)-\mathcal{T}_{\alpha+m-1}^{\varphi_2}
(vu_2)(z)\right|\nonumber\\&& \preceq  |\mathcal{T}_{\alpha+m-1}^{\varphi_1}(vu_1)(z)|\rho(z)+(1-|\varphi_2(z)|^2)^{\alpha+m-1}\nonumber\\&&\cdot\left|[(I- L_n^{**})f]^{(m)}(\varphi_2(z)) \right| \left|\mathcal{T}_{\alpha+m-1}^{\varphi_1}(vu_1)(z)-\mathcal{T}_{\alpha+m-1}^{\varphi_2}
(vu_2)(z)\right|.\label{F1}\end{eqnarray}
Analogously, we obtain that  \begin{eqnarray} &&v(z)\left|u_1(z)[(I- L_n^{**})f]^{(m)}(\varphi_1(z))-u_2(z)[(I- L_n^{**})f]^{(m)}(\varphi_2(z))\right|\nonumber\\&&\preceq
|\mathcal{T}_{\alpha+m-1}^{\varphi_2}(vu_2)(z)|\rho(z)
+(1-|\varphi_1(z)|^2)^{\alpha+m-1}\nonumber\\&& \cdot\left|[(I- L_n^{**})f]^{(m)}(\varphi_1(z)) \right| \left|\mathcal{T}_{\alpha+m-1}^{\varphi_1}(vu_1)(z)-\mathcal{T}_{\alpha+m-1}^{\varphi_2}
(vu_2)(z)\right|. \label{F2} \end{eqnarray} Now employ Lemma \ref{lemma Ln} and the boundedness of $\left|\mathcal{T}_{\alpha+m-1}^{\varphi_1}(vu_1)(z)-\mathcal{T}_{\alpha+m-1}^{\varphi_2}
(vu_2)(z)\right|$ in \eqref{F2} to show  that
 \begin{eqnarray}&& \limsup\limits_{n\rightarrow \infty} \sup\limits_{\|f\|_{\mathcal{B}^\alpha}\leq 1}I_2\nonumber\\&\leq& \limsup\limits_{n\rightarrow \infty} \sup\limits_{\|f\|_{\mathcal{B}^\alpha}\leq 1} \sup\limits_{|\varphi_2(z)|>r}  |\mathcal{T}_{\alpha+m-1}^{\varphi_2}(vu_2)(z)|\rho(z)
\nonumber\\&&+\limsup\limits_{n\rightarrow \infty} \sup\limits_{\|f\|_{\mathcal{B}^\alpha}\leq 1}\sup\limits_{|\varphi_1(z)|\leq r}(1-|\varphi_1(z)|^2)^{\alpha+m-1} \left|[(I- L_n^{**})f]^{(m)}(\varphi_1(z)) \right| \nonumber\\&& \cdot \left|\mathcal{T}_{\alpha+m-1}^{\varphi_1}(vu_1)(z)-\mathcal{T}_{\alpha+m-1}^{\varphi_2}
(vu_2)(z)\right|\nonumber\\&\preceq& \sup\limits_{|\varphi_2(z)|>r}  |\mathcal{T}_{\alpha+m-1}^{\varphi_2}(vu_2)(z)|\rho(z). \label{I2}\end{eqnarray}
Similarly, employing \eqref{F1} we deduce that
\begin{eqnarray}&& \limsup\limits_{n\rightarrow \infty} \sup\limits_{\|f\|_{\mathcal{B}^\alpha}\leq 1}I_3 \preceq \sup\limits_{|\varphi_1(z)|>r}  |\mathcal{T}_{\alpha+m-1}^{\varphi_1}(vu_1)(z)|\rho(z). \label{I3} \end{eqnarray} Finally, we deduce from \eqref{F1}  that
\begin{eqnarray} && \limsup\limits_{n\rightarrow \infty} \sup\limits_{\|f\|_{\mathcal{B}^\alpha}\leq 1}I_4 \preceq \sup\limits_{|\varphi_1(z)|>r} |\mathcal{T}_{\alpha+m-1}^{\varphi_1}(vu_1)(z)|\rho(z) \nonumber\\&&+\left\|(I- L_n^{**})f\right\|_{\mathcal{B}^\alpha} \sup\limits_{\min\{|\varphi_1(z)|,|\varphi_2(z)|\}>r} \left|\mathcal{T}_{\alpha+m-1}^{\varphi_1}(vu_1)(z)-\mathcal{T}_{\alpha+m-1}^{\varphi_2}
(vu_2)(z)\right|\nonumber\\&&  \preceq \sup\limits_{|\varphi_1(z)|>r} \mathcal{T}_{\alpha+m-1}^{\varphi_1}(vu_1)(z)\rho(z) \nonumber\\&&+  \sup\limits_{\min\{|\varphi_1(z)|,|\varphi_2(z)|\}>r} \left|\mathcal{T}_{\alpha+m-1}^{\varphi_1}(vu_1)(z)-\mathcal{T}_{\alpha+m-1}^{\varphi_2}
(vu_2)(z)\right|.\label{I41}  \end{eqnarray} Consequently,   \eqref{F2}  entails that
\begin{eqnarray} && \limsup\limits_{n\rightarrow \infty} \sup\limits_{\|f\|_{\mathcal{B}^\alpha}\leq 1}I_4 \preceq \sup\limits_{|\varphi_2(z)|>r} |\mathcal{T}_{\alpha+m-1}^{\varphi_2}(vu_2)(z)|\rho(z) \nonumber\\&&+  \sup\limits_{\min\{|\varphi_1(z)|,|\varphi_2(z)|\}>r} \left|\mathcal{T}_{\alpha+m-1}^{\varphi_1}(vu_1)(z)-\mathcal{T}_{\alpha+m-1}^{\varphi_2}
(vu_2)(z)\right|. \label{I42} \end{eqnarray}
Combining \eqref{I1}, \eqref{I2}, \eqref{I3} and \eqref{I41}, \eqref{I42}, we find that
\begin{eqnarray*}   && \|D_{\varphi_1,u_1}^m-D_{\varphi_2,u_2}^m\|_{e, \mathcal{B}^\alpha\rightarrow H_v^\infty}\nonumber\\&&\preceq \lim\limits_{r\rightarrow 1}\sup\limits_{|\varphi_1(z)|>r}\left|\mathcal{T}_{\alpha+m-1}^{\varphi_1}
(vu_1)(z)\right| \rho(z) +\lim\limits_{r\rightarrow 1}\sup\limits_{|\varphi_2(z)|>r}\left|\mathcal{T}_{\alpha+m-1}^{\varphi_2}(vu_2)(z)\right| \rho(z) \nonumber\\&&+\lim\limits_{r\rightarrow 1}\sup\limits_{\min\{|\varphi_1(z)|,|\varphi_2(z)|\}>r}\left|\mathcal{T}_{\alpha+m-1}^{\varphi_1}(vu_1)(z)-\mathcal{T}_{\alpha+m-1}^{\varphi_2}
(vu_2)(z)\right|.  \end{eqnarray*} This ends all the proof for the essential norm estimation.
\end{proof}
At last we give three equivalent characterizations for the compactness of $D_{\varphi_1,u_1}^m- D_{\varphi_2,u_2}^m: \mathcal{B}^\alpha\rightarrow H_v^\infty$.

 \begin{theorem} Let $m\in \mathbb{N}_0,$ $0<\alpha<\infty$ and $v$ be a weight. Suppose $u_1, u_2\in H(\mathbb{D}),$ $\varphi_1, \varphi_2 \in S(\mathbb{D})$. Suppose that $D_{\varphi_i,u_i}^m : \mathcal{B}^\alpha\rightarrow H_v^\infty$ is bounded for $i=1,2$, then  $D_{\varphi_1,u_1}^m- D_{\varphi_2,u_2}^m: \mathcal{B}^\alpha\rightarrow H_v^\infty$ is compact if and only if one of the following statements hold, \begin{eqnarray*}&&(i)\; \lim\limits_{r\rightarrow 1}\sup\limits_{|\varphi_1(z)|>r}\left|\mathcal{T}_{\alpha+m-1}^{\varphi_1}(vu_1)(z)\right| \rho(z) +\lim\limits_{r\rightarrow 1}\sup\limits_{|\varphi_2(z)|>r}\left|\mathcal{T}_{\alpha+m-1}^{\varphi_2}(vu_2)(z)\right| \rho(z) \nonumber\\&&+ \lim\limits_{r\rightarrow 1}\sup\limits_{\min\{|\varphi_1(z)|,|\varphi_2(z)|\}>r}\left|\mathcal{T}_{\alpha+m-1}^{\varphi_1}(vu_1)(z)-\mathcal{T}_{\alpha+m-1}^{\varphi_2}
(vu_2)(z)\right|=0; \nonumber\\&&(ii)\; \limsup\limits_{|a|\rightarrow 1}\|(D_{\varphi_1,u_1}^m-D_{\varphi_2,u_2}^m) f_{a}\|_v+\limsup\limits_{|a|\rightarrow 1}\|(D_{\varphi_1,u_1}^m-D_{\varphi_2,u_2}^m) g_{a}\|_v =0;\nonumber\\&& (iii)\;\limsup\limits_{n\rightarrow \infty}  n^{\alpha+m-1}\|u_1 \varphi_1^{n} -u_2 \varphi_2^n \|_v=0.  \end{eqnarray*}\end{theorem}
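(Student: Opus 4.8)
The plan is to deduce all three equivalences directly from the essential norm estimate proved in the previous theorem, combined with the elementary fact recalled in the introduction that a bounded operator is compact precisely when its essential norm is zero.

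First I would observe that the standing hypothesis here --- namely that $D_{\varphi_i,u_i}^m:\mathcal{B}^\alpha\to H_v^\infty$ is bounded for $i=1,2$ --- is exactly the hypothesis under which the preceding theorem was established, and it also forces the difference $T:=D_{\varphi_1,u_1}^m-D_{\varphi_2,u_2}^m:\mathcal{B}^\alpha\to H_v^\infty$ to be bounded, so that both $\|T\|_{e,\mathcal{B}^\alpha\to H_v^\infty}$ and the notion of compactness of $T$ make sense. The previous theorem then supplies the chain of two-sided estimates asserting that $\|T\|_{e,\mathcal{B}^\alpha\to H_v^\infty}$ is comparable, with absolute constants, to the triple limit in $(i)$, to the quantity $\limsup_{|a|\to 1}\|Tf_a\|_v+\limsup_{|a|\to 1}\|Tg_a\|_v$ in $(ii)$, and to $\limsup_{n\to\infty}n^{\alpha+m-1}\|u_1\varphi_1^n-u_2\varphi_2^n\|_v$ in $(iii)$.

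The argument then closes in one step. Since $T$ is bounded, $T$ is compact if and only if $\|T\|_{e,\mathcal{B}^\alpha\to H_v^\infty}=0$; and because each of the three nonnegative quantities appearing in $(i)$, $(ii)$, $(iii)$ differs from $\|T\|_{e,\mathcal{B}^\alpha\to H_v^\infty}$ only by multiplicative constants, the vanishing of any one of them is equivalent to $\|T\|_{e,\mathcal{B}^\alpha\to H_v^\infty}=0$, hence to the compactness of $T$. This simultaneously proves the stated ``if and only if one of the following holds'' and shows that $(i)$, $(ii)$ and $(iii)$ are mutually equivalent.

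I do not expect any genuine obstacle: the only point worth checking is that the constants hidden in the $\approx$ signs of the essential norm theorem are independent of $u_1,u_2,\varphi_1,\varphi_2$ (they are, arising only from Stirling-type asymptotics and from the fixed Lemma~\ref{lem di} and Lemma~\ref{lemma Ln}), so that ``comparable to zero'' really does mean ``equal to zero''. If one preferred a proof not routed through the essential norm theorem, one could instead chain $(iii)\Rightarrow(ii)$ via Lemma~\ref{lemma limFGN}, $(ii)\Rightarrow(i)$ via Lemma~\ref{lem TGN}, $(i)\Rightarrow$ compactness by repeating the $L_n^{**}$-truncation estimate of the essential norm theorem with right-hand side now $0$, and compactness $\Rightarrow(iii)$ from the normalized monomials $z^n/\|z^n\|_{\mathcal{B}^\alpha}\to 0$ in $\mathcal{B}^\alpha$ together with \eqref{zn}; but invoking the essential norm theorem is far more economical.
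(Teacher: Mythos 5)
Your proposal is correct and is exactly the argument the paper intends: the theorem is stated immediately after the essential norm estimate and left without proof precisely because it follows from combining that estimate with the fact, recalled in the introduction, that a bounded operator is compact if and only if its essential norm vanishes. Your observation that the boundedness of each $D_{\varphi_i,u_i}^m$ gives the boundedness of the difference, so that the essential norm theorem applies, is the only point that needs saying, and you say it.
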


\section{Some Corollaries}
In this section, we listed some corollaries for the boundedness and compactness for the difference of several classical operators, such as, $C_{\varphi_1}-C_{\varphi_2}$, $u_1C_{\varphi_1}-u_2C_{\varphi_2},$ $C_{\varphi_1}D- C_{\varphi_2}D$ and $u_1 C_{\varphi_1}D-u_2 C_{\varphi_2}D$ from $\mathcal{B}^\alpha$ into $H_v^\infty.$\vspace{1mm}

\textbf{Case I}:\; Let $u_1=u_2=id$, the identity map,  and $\;m=0,$  then $D_{\varphi_1,id}^0-D_{\varphi_2,id}^0=C_{\varphi_1}-C_{\varphi_2}.$  And in this case, we still use the notations $f_a$ and $g_a$ to stand for the following test functions,
\begin{eqnarray*}&&f_a(z)= \frac{(1-|a|^2)^\alpha}{(1-\bar{a}z)^{2\alpha+m-1}},\\&& g_a(z)=  \frac{(1-|a|^2)^\alpha}{(1-\bar{a}z)^{2\alpha+m-1}}\cdot \frac{a-z}{1-\bar{a}z}.\end{eqnarray*}
\begin{corollary} Let  $0<\alpha<\infty$ and $v$ be a weight. Suppose  $\varphi_1, \varphi_2 \in S(\mathbb{D})$. Then the following statements are equivalent,

$(i)$ $C_{\varphi_1} -C_{\varphi_2}: \mathcal{B}^\alpha\rightarrow H_v^\infty$ is bounded;

$(ii)$ \begin{eqnarray}&&\sup\limits_{z\in \mathbb{D}}|\mathcal{T}_{\alpha-1}^{\varphi_1}(v)(z)|\rho(z)+\sup\limits_{z\in \mathbb{D}}\left|\mathcal{T}_{\alpha-1}^{\varphi_1}(v)(z) - \mathcal{T}_{\alpha-1}^{\varphi_2}(v)(z) \right| <\infty, \nonumber\\&&  \sup\limits_{z\in \mathbb{D}}|\mathcal{T}_{\alpha-1}^{\varphi_2}(v)(z)|\rho(z)+\sup\limits_{z\in \mathbb{D}}\left|\mathcal{T}_{\alpha-1}^{\varphi_1}(v)(z) - \mathcal{T}_{\alpha-1}^{\varphi_2}(v)(z) \right| <\infty;\nonumber
\end{eqnarray}

$(iii)$ \begin{eqnarray*} \sup\limits_{a\in \mathbb{D}}\|(C_{\varphi_1}-C_{\varphi_2}) f_a\|_v+\sup\limits_{a\in \mathbb{D}} \|(C_{\varphi_1}-C_{\varphi_2}) g_a\|_v <\infty; \end{eqnarray*}

$(iv)$ \begin{eqnarray*}\sup\limits_{n\in \mathbb{N}_0}  n^{\alpha-1}\| \varphi_1^n- \varphi_2^n\|_v<\infty.  \end{eqnarray*}
\end{corollary}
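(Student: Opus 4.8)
The plan is to derive Corollary 4.1 as the special case of Theorem 2.6 (the main boundedness theorem) obtained by setting $u_1=u_2=id$ and $m=0$. First I would observe that under this specialization the operator $D_{\varphi_1,u_1}^m-D_{\varphi_2,u_2}^m$ becomes exactly $C_{\varphi_1}-C_{\varphi_2}$, since $D_{\varphi,id}^0 f = f\circ\varphi = C_\varphi f$; and the auxiliary quantities $\mathcal{T}_{\alpha+m-1}^{\varphi_i}(vu_i)$ reduce to $\mathcal{T}_{\alpha-1}^{\varphi_i}(v)(z)=v(z)/(1-|\varphi_i(z)|^2)^{\alpha-1}$. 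Likewise the test functions $f_a,g_a$ of \eqref{f_a}--\eqref{g_a} collapse: with $m=0$ the iterated integrals disappear and we are left with $f_a(z)=(1-|a|^2)^\alpha(1-\bar a z)^{-(2\alpha-1)}$ and $g_a(z)=(1-|a|^2)^\alpha(1-\bar a z)^{-(2\alpha-1)}\cdot(a-z)/(1-\bar a z)$, which matches the display given just before the corollary (with $m=0$). Finally $\|u_1\varphi_1^n-u_2\varphi_2^n\|_v = \|\varphi_1^n-\varphi_2^n\|_v$ and $n^{\alpha+m-1}=n^{\alpha-1}$.

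Next I would simply invoke Theorem 2.6 verbatim with these substitutions: the equivalences $(i)\Leftrightarrow(ii)\Leftrightarrow(iii)\Leftrightarrow(iv)$ of the corollary are precisely the four equivalent conditions of the theorem after renaming. One small point worth a sentence of justification is that Theorem 2.6 is stated for $m\in\mathbb{N}_0$, so $m=0$ is admitted and no degeneracy occurs — in particular $2\alpha+m-1=2\alpha-1>-1$ for $0<\alpha<\infty$, so the binomial series expansion of $(1-\bar a t)^{-(2\alpha-1)}$ and the Stirling estimates used in Lemma 2.4 remain valid. I would also note explicitly that $\mathcal{B}^\alpha$ and $H_v^\infty$ are unchanged, so the target and source spaces are exactly as in the theorem.

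Since this is purely a specialization, there is essentially no obstacle; the only thing that requires a moment of care is bookkeeping — making sure the reduced forms of $f_a$, $g_a$, and the Maclaurin coefficients appearing in Lemma 2.4 are consistent when $m=0$, so that condition $(iii)$ of the corollary genuinely coincides with condition $(iii)$ of the theorem. Concretely I would write: ``Taking $m=0$ and $u_1=u_2=id$ in Theorem 2.6 and noting that the test functions \eqref{f_a}, \eqref{g_a} reduce to the stated forms, the four statements of the corollary are exactly the four equivalent statements of Theorem 2.6; the conclusion follows.'' That one-paragraph argument, together with the explicit display of the reduced test functions already placed before the corollary, constitutes the whole proof.
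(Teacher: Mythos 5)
Your proposal is correct and matches the paper exactly: the paper offers no separate proof of this corollary, simply presenting it (in ``Case I'') as the specialization $u_1=u_2=id$, $m=0$ of the main boundedness theorem, with the test functions $f_a,g_a$ reduced as you describe. Your bookkeeping of the reduced quantities $\mathcal{T}_{\alpha-1}^{\varphi_i}(v)$, $n^{\alpha-1}$, and the collapsed integrals is the whole content of the argument.
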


 \begin{corollary} Let $0<\alpha<\infty$, $v$ be a weight and $\varphi_1, \varphi_2 \in S(\mathbb{D})$. Suppose that $C_{\varphi_i}: \mathcal{B}^\alpha\rightarrow H_v^\infty$ is bounded for $i=1,2$. Then    the following statements  are equivalent, \begin{eqnarray*}&&(i) \mbox{ $C_{\varphi_1}-C_{\varphi_2}: \mathcal{B}^\alpha\rightarrow H_v^\infty$ is compact};
 \\&&(ii)\; \lim\limits_{r\rightarrow 1}\sup\limits_{|\varphi_1(z)|>r}\left|\mathcal{T}_{\alpha-1}^{\varphi_1}(v)(z)\right| \rho(z) +\lim\limits_{r\rightarrow 1}\sup\limits_{|\varphi_2(z)|>r}\left|\mathcal{T}_{\alpha-1}^{\varphi_2}(v)(z)\right| \rho(z) \nonumber\\&&+ \lim\limits_{r\rightarrow 1}\sup\limits_{\min\{|\varphi_1(z)|,|\varphi_2(z)|\}>r}\left|\mathcal{T}_{\alpha-1}^{\varphi_1}(v)(z)
 -\mathcal{T}_{\alpha-1}^{\varphi_2}
(v)(z)\right|=0; \nonumber\\&&(iii)\; \limsup\limits_{|a|\rightarrow 1}\|(C_{\varphi_1}-C_{\varphi_2}) f_{a}\|_v+\limsup\limits_{|a|\rightarrow 1}\|(C_{\varphi_1}-C_{\varphi_2} ) g_{a}\|_v =0;\nonumber\\&& (iv)\;\limsup\limits_{n\rightarrow \infty}  n^{\alpha-1}\| \varphi_1^{n} -\varphi_2^n \|_v=0.  \end{eqnarray*}\end{corollary}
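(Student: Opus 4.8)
The plan is to obtain this corollary as a direct specialization of the two main theorems of Sections 2 and 3, together with the observation that the test functions $f_a, g_a$ degenerate to the displayed rational functions when $m=0$. First I would verify the hypothesis compatibility: the assumption that $C_{\varphi_i} = D^0_{\varphi_i, id} : \mathcal{B}^\alpha \to H_v^\infty$ is bounded for $i=1,2$ is exactly the hypothesis required to apply the essential-norm theorem (Theorem in Section 3) with $u_1 = u_2 = id$ and $m = 0$. In this situation $\mathcal{T}^{\varphi_i}_{\alpha+m-1}(vu_i)(z)$ becomes $\mathcal{T}^{\varphi_i}_{\alpha-1}(v)(z) = v(z)/(1-|\varphi_i(z)|^2)^{\alpha-1}$, and the iterated integrals in \eqref{f_a}, \eqref{g_a} collapse (no integration, since $m=0$) to $f_a(z) = (1-|a|^2)^\alpha/(1-\bar a z)^{2\alpha-1}$ and $g_a(z) = f_a(z)\cdot (a-z)/(1-\bar a z)$, precisely the functions displayed in Case I.

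Next I would record the chain of equivalences. The boundedness theorem of Section 2, specialized to $u_1=u_2=id$, $m=0$, gives immediately that statements $(i)$--$(iv)$ of the first corollary are equivalent, since $u_1\varphi_1^n - u_2\varphi_2^n = \varphi_1^n - \varphi_2^n$ and $n^{\alpha+m-1} = n^{\alpha-1}$. For the compactness corollary, the essential-norm theorem of Section 3 yields
\begin{eqnarray*}
\|C_{\varphi_1}-C_{\varphi_2}\|_{e,\mathcal{B}^\alpha\to H_v^\infty}
&\approx& \lim_{r\to1}\sup_{|\varphi_1(z)|>r}|\mathcal{T}^{\varphi_1}_{\alpha-1}(v)(z)|\rho(z) + \lim_{r\to1}\sup_{|\varphi_2(z)|>r}|\mathcal{T}^{\varphi_2}_{\alpha-1}(v)(z)|\rho(z)\\
&& + \lim_{r\to1}\sup_{\min\{|\varphi_1(z)|,|\varphi_2(z)|\}>r}|\mathcal{T}^{\varphi_1}_{\alpha-1}(v)(z)-\mathcal{T}^{\varphi_2}_{\alpha-1}(v)(z)|\\
&\approx& \limsup_{|a|\to1}\|(C_{\varphi_1}-C_{\varphi_2})f_a\|_v + \limsup_{|a|\to1}\|(C_{\varphi_1}-C_{\varphi_2})g_a\|_v\\
&\approx& \limsup_{n\to\infty} n^{\alpha-1}\|\varphi_1^n-\varphi_2^n\|_v.
\end{eqnarray*}
Since $C_{\varphi_1}-C_{\varphi_2}$ is compact if and only if its essential norm is zero (as recalled in the introduction), each of the three displayed quantities vanishing is equivalent to compactness; this is exactly the assertion $(i)\Leftrightarrow(ii)\Leftrightarrow(iii)\Leftrightarrow(iv)$ of the stated corollary.

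The only genuine point to check — and hence the main (mild) obstacle — is that substituting $m=0$ into the hypotheses of the Section 3 theorem is legitimate, i.e. that $\mathcal{B}^\alpha$-boundedness of each individual $C_{\varphi_i}$ holds and that the constants hidden in $\Gamma(2\alpha+m-1)$ and in Stirling's formula remain finite and positive when $m=0$; this requires $2\alpha - 1 > 0$ only in the intermediate Gamma-function manipulations, but those were used merely to produce equivalences of coefficient sums and the final statements are coefficient-free, so no extra restriction on $\alpha$ is introduced. I would therefore simply remark that the corollary follows by taking $m=0$ and $u_1=u_2=id$ in Theorems of Sections 2 and 3, note the explicit form of $f_a, g_a$ in this case, and invoke $\|T\|_e = 0 \Leftrightarrow T$ compact to pass from the essential-norm estimate to the compactness characterization.
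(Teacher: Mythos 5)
Your proposal matches the paper's own route exactly: the corollary appears in Section 4 with no separate proof precisely because it is the specialization $m=0$, $u_1=u_2=id$ of the boundedness theorem of Section 2 and of the essential-norm and compactness theorems of Section 3, with $\mathcal{T}^{\varphi_i}_{\alpha+m-1}(vu_i)$ reducing to $\mathcal{T}^{\varphi_i}_{\alpha-1}(v)$, the iterated integrals defining $f_a,g_a$ collapsing to the displayed rational functions, and compactness read off from $\|T\|_e=0$. Your side remark about the Gamma-function coefficients when $2\alpha+m-1\le 0$ touches a genuine subtlety of the underlying lemma for $f_a,g_a$, but the paper itself imposes no restriction there either, so your derivation is as complete as the source's.
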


\textbf{Case II}:\; Let $\;m=0,$  then $D_{\varphi_1,u_1}^0-D_{\varphi_2,u_2}^0=u_1C_{\varphi_1}-u_2C_{\varphi_2}.$  And in this case, we still use $f_a$ and $g_a$ as below.
\begin{eqnarray*}&&f_a(z)= \frac{(1-|a|^2)^\alpha}{(1-\bar{a}z)^{2\alpha+m-1}},\\&& g_a(z)=  \frac{(1-|a|^2)^\alpha}{(1-\bar{a}z)^{2\alpha+m-1}}\cdot \frac{a-z}{1-\bar{a}z}.\end{eqnarray*}

\begin{corollary} Let $0<\alpha<\infty$ and $v$ be a weight. Suppose $u_1, u_2\in H(\mathbb{D})$ and $\varphi_1, \varphi_2 \in S(\mathbb{D})$. Then the following statements are equivalent,

$(i)$ $u_1C_{\varphi_1}-u_2C_{\varphi_2}: \mathcal{B}^\alpha\rightarrow H_v^\infty$ is bounded;

$(ii)$ \begin{eqnarray*}&&\sup\limits_{z\in \mathbb{D}}|\mathcal{T}_{\alpha -1}^{\varphi_1}(vu_1)(z)|\rho(z)+\sup\limits_{z\in \mathbb{D}}\left|\mathcal{T}_{\alpha-1}^{\varphi_1}(vu_1)(z) - \mathcal{T}_{\alpha-1}^{\varphi_2}(vu_2)(z) \right| <\infty,  \\&&  \sup\limits_{z\in \mathbb{D}}|\mathcal{T}_{\alpha-1}^{\varphi_2}(vu_2)(z)|\rho(z)+\sup\limits_{z\in \mathbb{D}}\left|\mathcal{T}_{\alpha-1}^{\varphi_1}(vu_1)(z) - \mathcal{T}_{\alpha-1}^{\varphi_2}(vu_2)(z) \right| <\infty;
\end{eqnarray*}

$(iii)$ \begin{eqnarray*} \sup\limits_{a\in \mathbb{D}}\|(u_1C_{\varphi_1}-u_2C_{\varphi_2}) f_a\|_v+\sup\limits_{a\in \mathbb{D}} \|(u_1C_{\varphi_1}-u_2C_{\varphi_2}) g_a\|_v <\infty; \end{eqnarray*}

$(iv)$ \begin{eqnarray*}\sup\limits_{n\in \mathbb{N}_0}  n^{\alpha-1}\|u_1\varphi_1^{n}-u_2\varphi_2^n\|_v<\infty. \end{eqnarray*}
\end{corollary}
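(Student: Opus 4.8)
The plan is to derive Corollary 4.5 as the special case $m=0$ of the boundedness Theorem~2.4, after checking that the substitution $m=0$ is legitimate and that the test functions degenerate as claimed. First I would observe that for $m=0$ the iterated integrals in \eqref{f_a} and \eqref{g_a} collapse to the integrand itself, so that $f_a(z)=(1-|a|^2)^\alpha/(1-\bar a z)^{2\alpha-1}$ and $g_a(z)=f_a(z)\cdot(a-z)/(1-\bar a z)$, exactly the functions recorded before the corollary; moreover $f_a^{(0)}=f_a$ and $g_a^{(0)}=g_a$ still satisfy \eqref{fgam} trivially, and $D_{\varphi_i,u_i}^0=u_iC_{\varphi_i}$ by the definition given in the introduction. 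Thus every ingredient used in the proof of Theorem~2.4 --- Lemma~\ref{lem di} (which is stated for all $\alpha$ and implicitly all $m\in\mathbb N_0$), Lemma~\ref{lemma TFG}, Lemma~\ref{lemma FGN}, and the norm estimate $\|z^n\|_{\mathcal B^\alpha}\approx n^{1-\alpha}$ --- specializes verbatim with $\alpha+m-1$ replaced by $\alpha-1$ and $\mathcal T_{\alpha+m-1}^{\varphi_i}(vu_i)$ replaced by $\mathcal T_{\alpha-1}^{\varphi_i}(vu_i)$.

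Next I would spell out the four implications exactly as in Theorem~2.4. The chain $(iv)\Rightarrow(iii)\Rightarrow(ii)$ is immediate from Lemma~\ref{lemma FGN} and Lemma~\ref{lemma TFG} applied with $m=0$. For $(i)\Rightarrow(iv)$ I would test $u_1C_{\varphi_1}-u_2C_{\varphi_2}$ against $z^n/\|z^n\|_{\mathcal B^\alpha}$; since $m=0$ the differentiation drops out and one gets directly
\begin{eqnarray*}
\|u_1C_{\varphi_1}-u_2C_{\varphi_2}\|_{\mathcal B^\alpha\to H_v^\infty}\succeq n^{\alpha-1}\|u_1\varphi_1^n-u_2\varphi_2^n\|_v,
\end{eqnarray*}
which is cleaner than \eqref{zn} because there is no shift $n\mapsto n-m$. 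For $(ii)\Rightarrow(i)$ I would run the estimate \eqref{TRHO1}--\eqref{TRHO2} with $m=0$: for $f\in\mathcal B^\alpha$ write $f^{(0)}=f$, split $v(z)|u_1(z)f(\varphi_1(z))-u_2(z)f(\varphi_2(z))|$ by adding and subtracting $\mathcal T_{\alpha-1}^{\varphi_1}(vu_1)(z)(1-|\varphi_2(z)|^2)^{\alpha-1}f(\varphi_2(z))$, and bound the two resulting terms by Lemma~\ref{lem di} (with $m=0$) and by $|\mathcal T_{\alpha-1}^{\varphi_1}(vu_1)(z)-\mathcal T_{\alpha-1}^{\varphi_2}(vu_2)(z)|\cdot(1-|\varphi_2(z)|^2)^{\alpha-1}|f(\varphi_2(z))|\preceq\|f\|_{\mathcal B^\alpha}|\mathcal T_{\alpha-1}^{\varphi_1}(vu_1)(z)-\mathcal T_{\alpha-1}^{\varphi_2}(vu_2)(z)|$, using that $(1-|w|^2)^{\alpha-1}|f(w)|\preceq\|f\|_{\mathcal B^\alpha}$ for $f\in\mathcal B^\alpha$ (equivalently $f\in H^\infty_{\alpha-1}$ up to a constant). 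The symmetric split yields the second line of $(ii)\Rightarrow(i)$.

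The only genuine subtlety --- and the step I expect to need the most care --- is the borderline behavior at $\alpha=1$, where $2\alpha+m-1=1$ so $f_a$ is bounded rather than genuinely unbounded, and more importantly where $H^\infty_{\alpha-1}=H^\infty_0$ must be interpreted as the estimate $\sup_{z}|f(z)|\preceq\|f\|_{\mathcal B^\alpha}$ modulo constants (recall $\mathcal B^\alpha$ with $\alpha=1$ is the classical Bloch space and the pointwise bound carries a logarithmic loss in general, so one must use the reformulation $\|f\|_{\mathcal B^\alpha}\approx\sup_z(1-|z|^2)^{\alpha+m-1}|f^{(m)}(z)|$ that the paper already invokes, together with Lemma~\ref{lem di}, rather than a naive sup bound). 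Since Lemma~\ref{lem di} is stated for all $0<\alpha<\infty$ and is the only tool we feed into the proof of $(ii)\Rightarrow(i)$, this case is already absorbed; I would simply remark that the argument is uniform in $\alpha\in(0,\infty)$ and $m\in\mathbb N_0$, so setting $m=0$ in Theorem~2.4 gives precisely the four equivalences claimed. Hence the proof reduces to one sentence: \emph{apply Theorem~2.4 with $m=0$, noting $D_{\varphi_i,u_i}^0=u_iC_{\varphi_i}$ and that $f_a,g_a$ reduce to the displayed functions.}
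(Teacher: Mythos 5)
Your proposal is correct and coincides with the paper's own (implicit) argument: the corollary is obtained exactly by setting $m=0$ in the boundedness theorem of Section~2, noting $D_{\varphi_i,u_i}^0=u_iC_{\varphi_i}$ and that the iterated integrals defining $f_a$ and $g_a$ collapse to the displayed kernels. The paper offers no separate proof, so your re-tracing of the four implications (and the remark on the borderline $\alpha=1$ case) is simply a more detailed account of the same specialization.
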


\begin{corollary} Let  $0<\alpha<\infty$ and $v$ be a weight. Suppose $u_1, u_2\in H(\mathbb{D})$ and $\varphi_1, \varphi_2 \in S(\mathbb{D})$. Suppose that $u_iC_{\varphi_i}: \mathcal{B}^\alpha\rightarrow H_v^\infty$ is bounded for $i=1,2$, then  the following statements are equivalent,  \begin{eqnarray*}&&(i)\; \mbox{$u_1C_{\varphi_1}-u_2 C_{\varphi_2}: \mathcal{B}^\alpha\rightarrow H_v^\infty$ is compact;}\nonumber\\&&(ii)\; \lim\limits_{r\rightarrow 1}\sup\limits_{|\varphi_1(z)|>r}\left|\mathcal{T}_{\alpha-1}^{\varphi_1}(vu_1)(z)\right| \rho(z) +\lim\limits_{r\rightarrow 1}\sup\limits_{|\varphi_2(z)|>r}\left|\mathcal{T}_{\alpha-1}^{\varphi_2}(vu_2)(z)\right| \rho(z) \nonumber\\&&+ \lim\limits_{r\rightarrow 1}\sup\limits_{\min\{|\varphi_1(z)|,|\varphi_2(z)|\}>r}\left|\mathcal{T}_{\alpha-1}^{\varphi_1}(vu_1)(z)
-\mathcal{T}_{\alpha-1}^{\varphi_2}(vu_2)(z)\right|=0; \nonumber\\&&(iii)\; \limsup\limits_{|a|\rightarrow 1}\|(u_1C_{\varphi_1}-u_2C_{\varphi_2}) f_{a}\|_v+\limsup\limits_{|a|\rightarrow 1}\|(u_1C_{\varphi_1}-u_2C_{\varphi_2}) g_{a}\|_v =0;\nonumber\\&& (iv)\;\limsup\limits_{n\rightarrow \infty}  n^{\alpha-1}\|u_1 \varphi_1^{n} -u_2 \varphi_2^n \|_v=0.  \end{eqnarray*}\end{corollary}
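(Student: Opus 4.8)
The plan is to obtain this corollary as the special case $m=0$ of the two main results of Section~3 (the essential-norm estimate and the compactness characterization for $D_{\varphi_1,u_1}^m-D_{\varphi_2,u_2}^m$). First I would note that, for arbitrary $u_1,u_2\in H(\mathbb{D})$ and $m=0$, the definition of the weighted differentiation composition operator gives $D_{\varphi_1,u_1}^{0}-D_{\varphi_2,u_2}^{0}=u_1C_{\varphi_1}-u_2C_{\varphi_2}$; consequently the hypothesis that $u_iC_{\varphi_i}\colon\mathcal{B}^\alpha\to H_v^\infty$ is bounded for $i=1,2$ is exactly the standing assumption needed to invoke those theorems. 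Substituting $m=0$ turns $\alpha+m-1$ into $\alpha-1$, so $\mathcal{T}_{\alpha+m-1}^{\varphi_i}(vu_i)$ becomes $\mathcal{T}_{\alpha-1}^{\varphi_i}(vu_i)$ and the exponent $n^{\alpha+m-1}$ in the power-of-symbol quantity becomes $n^{\alpha-1}$.

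Next I would check that the test families \eqref{f_a} and \eqref{g_a} degenerate correctly at $m=0$, reading an $m$-fold iterated integral $\int_0^z\int_0^{t_m}\cdots\int_0^{t_2}(\,\cdot\,)\,dt_1\cdots dt_m$ with $m=0$ as the identity operation. Then
\begin{eqnarray*}
f_a(z)=\frac{(1-|a|^2)^\alpha}{(1-\bar a z)^{2\alpha-1}},\qquad
g_a(z)=\frac{(1-|a|^2)^\alpha}{(1-\bar a z)^{2\alpha-1}}\cdot\frac{a-z}{1-\bar a z},
\end{eqnarray*}
which are precisely the functions fixed in the statement preceding the corollary, and $f_a^{(0)}=f_a$, $g_a^{(0)}=g_a$, so that \eqref{fgam} still holds. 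With these identifications every step of Lemmas~\ref{lem di}, \ref{lemma TFG}, \ref{lemma FGN}, \ref{lem TGN} and \ref{lemma limFGN}, and hence of the essential-norm theorem, carries over verbatim.

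Finally, since $u_1C_{\varphi_1}-u_2C_{\varphi_2}$ is compact if and only if $\|u_1C_{\varphi_1}-u_2C_{\varphi_2}\|_{e,\mathcal{B}^\alpha\to H_v^\infty}=0$, the chain of equivalences in the essential-norm theorem (with $m=0$) immediately yields the equivalence of (i)--(iv): the essential norm is comparable to each of the three quantities
\begin{eqnarray*}
&&\lim_{r\to1}\sup_{|\varphi_1(z)|>r}|\mathcal{T}_{\alpha-1}^{\varphi_1}(vu_1)(z)|\,\rho(z)
+\lim_{r\to1}\sup_{|\varphi_2(z)|>r}|\mathcal{T}_{\alpha-1}^{\varphi_2}(vu_2)(z)|\,\rho(z)\\
&&\qquad+\lim_{r\to1}\sup_{\min\{|\varphi_1(z)|,|\varphi_2(z)|\}>r}|\mathcal{T}_{\alpha-1}^{\varphi_1}(vu_1)(z)-\mathcal{T}_{\alpha-1}^{\varphi_2}(vu_2)(z)|,
\end{eqnarray*}
$\limsup_{|a|\to1}(\|(u_1C_{\varphi_1}-u_2C_{\varphi_2})f_a\|_v+\|(u_1C_{\varphi_1}-u_2C_{\varphi_2})g_a\|_v)$, and $\limsup_{n\to\infty}n^{\alpha-1}\|u_1\varphi_1^n-u_2\varphi_2^n\|_v$, so any one of them vanishes exactly when all do; alternatively one may quote the compactness characterization of Section~3 directly at $m=0$.

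Being a pure specialization, this admits no real obstacle; the only point meriting a line of care is the convention that an empty iterated integral is the identity, which is what makes the reduced $f_a,g_a$ in the statement coincide with \eqref{f_a}--\eqref{g_a} at $m=0$, together with the trivial observation that boundedness of the two summands $u_iC_{\varphi_i}$ is precisely the hypothesis the cited theorems require.
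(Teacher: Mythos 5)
Your proposal is correct and is exactly the route the paper intends: the corollary is the $m=0$ specialization of the Section~3 essential-norm theorem and compactness characterization, with $D_{\varphi_i,u_i}^0=u_iC_{\varphi_i}$, $\alpha+m-1$ becoming $\alpha-1$, and the test functions $f_a,g_a$ collapsing to the closed forms given in Case~II (your reading of the empty iterated integral as the identity, giving exponent $2\alpha-1$, is the right one). Nothing further is needed.
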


\textbf{Case III}:\; Let $u_1=u_2=id$, the identity map,  and $\;m=1,$  it's trivial that $D_{\varphi_1,u_1}^m-D_{\varphi_2,u_2}^m=C_{\varphi_1}D-C_{\varphi_2}D.$  And in this case, we still use $f_a$ and $g_a$ as below.
\begin{eqnarray*}&& f_a(z)=\int_0^z\frac{(1-|a|^2)^\alpha}{(1-\bar{a}t)^{2\alpha}}dt, \\&&
  g_a(z)=\int_0^z  \frac{(1-|a|^2)^\alpha}{(1-\bar{a}z)^{2\alpha}}\cdot \frac{a-t}{1-\bar{a}t} dt. \end{eqnarray*}
\begin{corollary} Let  $0<\alpha<\infty$, $v$ be a weight  and $\varphi_1, \varphi_2 \in S(\mathbb{D})$. Then the following statements are equivalent,

$(i)$ $C_{\varphi_1}D-C_{\varphi_2}D: \mathcal{B}^\alpha\rightarrow H_v^\infty$ is bounded;

$(ii)$ \begin{eqnarray*}&&\sup\limits_{z\in \mathbb{D}}|\mathcal{T}_{\alpha}^{\varphi_1}(v)(z)|\rho(z)+\sup\limits_{z\in \mathbb{D}}\left|\mathcal{T}_{\alpha}^{\varphi_1}(v)(z) - \mathcal{T}_{\alpha}^{\varphi_2}(v)(z) \right| <\infty,  \\&&  \sup\limits_{z\in \mathbb{D}}|\mathcal{T}_{\alpha}^{\varphi_2}(v)(z)|\rho(z)+\sup\limits_{z\in \mathbb{D}}\left|\mathcal{T}_{\alpha}^{\varphi_1}(v)(z) - \mathcal{T}_{\alpha}^{\varphi_2}(v)(z) \right| <\infty;
\end{eqnarray*}

$(iii)$ \begin{eqnarray*} \sup\limits_{a\in \mathbb{D}}\|( C_{\varphi_1}D- C_{\varphi_2}D) f_a\|_v+\sup\limits_{a\in \mathbb{D}} \|(C_{\varphi_1}D- C_{\varphi_2}D) g_a\|_v <\infty;\end{eqnarray*}

$(iv)$ \begin{eqnarray*}\sup\limits_{n\in \mathbb{N}_0}  n^{\alpha }\|\varphi_1 ^{n}-\varphi_2^n \|_v<\infty.  \end{eqnarray*}
\end{corollary}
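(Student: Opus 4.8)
The plan is to obtain this corollary as a direct specialization of the main boundedness theorem of Section~2 to the parameters $m=1$ and $u_1=u_2\equiv 1$. Here one uses the paper's convention that the subscript "$id$" in $D_{\varphi,u}^m$ stands for the weight $u\equiv 1$, so that $D_{\varphi_i,id}^1 = C_{\varphi_i}D$ and hence $D_{\varphi_1,u_1}^m-D_{\varphi_2,u_2}^m = C_{\varphi_1}D-C_{\varphi_2}D$. Since $m=1\in\mathbb{N}_0$, the constant $1\in H(\mathbb{D})$, $v$ is a weight and $\varphi_1,\varphi_2\in S(\mathbb{D})$, all hypotheses of the general theorem are met, so no new analysis is needed.

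The first step is to substitute $m=1$ into the data of that theorem. Then $2\alpha+m-1=2\alpha$ and $\alpha+m-1=\alpha$, so the $m$-fold iterated integrals in \eqref{f_a} and \eqref{g_a} collapse to the single integrals
\[f_a(z)=\int_0^z\frac{(1-|a|^2)^\alpha}{(1-\bar a t)^{2\alpha}}\,dt,\qquad g_a(z)=\int_0^z\frac{(1-|a|^2)^\alpha}{(1-\bar a t)^{2\alpha}}\cdot\frac{a-t}{1-\bar a t}\,dt,\]
which are precisely the test functions recorded in Case~III. Likewise the weighted quantities $\mathcal{T}_{\alpha+m-1}^{\varphi_i}(vu_i)$ reduce to $\mathcal{T}_{\alpha}^{\varphi_i}(v)(z)=v(z)/(1-|\varphi_i(z)|^2)^{\alpha}$, and the $n$-th power condition $\sup_{n\in\mathbb{N}_0}n^{\alpha+m-1}\|u_1\varphi_1^n-u_2\varphi_2^n\|_v<\infty$ becomes $\sup_{n\in\mathbb{N}_0}n^{\alpha}\|\varphi_1^n-\varphi_2^n\|_v<\infty$. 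With these identifications, the statements $(i)$--$(iv)$ of the corollary are verbatim the statements $(i)$--$(iv)$ of the general theorem, and the equivalence follows.

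The "hard part" is therefore purely clerical rather than analytic: one must spell out the notational convention $u=id\leftrightarrow u\equiv 1$ (otherwise the identification $D_{\varphi_i,id}^1=C_{\varphi_i}D$ is not literally correct), and one must double-check the exponent arithmetic so that the test functions and the $\mathcal{T}$-expressions match the displayed ones. Alternatively, if a self-contained argument is preferred, one can simply rerun Lemmas~\ref{lem di}, \ref{lemma TFG}, \ref{lemma FGN} and the boundedness theorem with $m=1$ and trivial weights, which reproduces the general proof line by line with no simplification of the actual estimates.
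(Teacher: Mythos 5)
Your proposal is correct and matches the paper's own treatment: the corollary is stated in Section 4 (Case III) precisely as the specialization $m=1$, $u_1=u_2\equiv 1$ of the main boundedness theorem of Section 2, with the test functions and the quantities $\mathcal{T}_{\alpha}^{\varphi_i}(v)$ obtained exactly by the exponent substitutions you describe. Your remark about the $id$-versus-$u\equiv 1$ convention is a fair clarification of the paper's notation and does not change the argument.
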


\begin{corollary} Let $0<\alpha<\infty$, $v$ be a weight and  $\varphi_1, \varphi_2 \in S(\mathbb{D})$. Suppose that $ C_{\varphi_i}D : \mathcal{B}^\alpha\rightarrow H_v^\infty$ is bounded for $i=1,2$, then the following statements are equivalent,
 \begin{eqnarray*}&&(i)\; \mbox{ $ C_{\varphi_1}D -  C_{\varphi_2}D: \mathcal{B}^\alpha\rightarrow H_v^\infty$ is compact;}\\&&(ii)\; \lim\limits_{r\rightarrow 1}\sup\limits_{|\varphi_1(z)|>r}\left|\mathcal{T}_{\alpha }^{\varphi_1}(v )(z)\right| \rho(z) +\lim\limits_{r\rightarrow 1}\sup\limits_{|\varphi_2(z)|>r}\left|\mathcal{T}_{\alpha}^{\varphi_2}(v )(z)\right| \rho(z) \nonumber\\&&+ \lim\limits_{r\rightarrow 1}\sup\limits_{\min\{|\varphi_1(z)|,|\varphi_2(z)|\}>r}\left|\mathcal{T}_{\alpha }^{\varphi_1}(v )(z)-\mathcal{T}_{\alpha }^{\varphi_2}
(v )(z)\right|=0; \nonumber\\&&(iii)\; \limsup\limits_{|a|\rightarrow 1}\|( C_{\varphi_1}D- C_{\varphi_2}D) f_{a}\|_v+\limsup\limits_{|a|\rightarrow 1}\|( C_{\varphi_1}D- C_{\varphi_2}D) g_{a}\|_v =0;\nonumber\\&& (iv)\;\limsup\limits_{n\rightarrow \infty}  n^{\alpha}\|  \varphi_1^{n} -  \varphi_2^n \|_v=0.  \end{eqnarray*}\end{corollary}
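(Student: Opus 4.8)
The plan is to obtain this corollary as the specialization of the general compactness characterization of Section~3 (the theorem asserting that $D_{\varphi_1,u_1}^m-D_{\varphi_2,u_2}^m:\mathcal{B}^\alpha\to H_v^\infty$ is compact if and only if any one of the three displayed limit conditions vanishes) to the parameter choice $u_1=u_2=id$ and $m=1$. First I would record the elementary reductions that this choice forces. Since $D_{\varphi_i,id}^1=C_{\varphi_i}D$, the operator under study becomes $C_{\varphi_1}D-C_{\varphi_2}D$. Because $\alpha+m-1=\alpha$ and $u_i\equiv 1$, the multipliers shrink to $\mathcal{T}_{\alpha+m-1}^{\varphi_i}(vu_i)=\mathcal{T}_{\alpha}^{\varphi_i}(v)$ for $i=1,2$. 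Finally, setting $m=1$ in the definitions \eqref{f_a} and \eqref{g_a} removes all but one layer of integration, so the test functions become exactly the $f_a(z)=\int_0^z (1-|a|^2)^\alpha(1-\bar a t)^{-2\alpha}\,dt$ and $g_a(z)=\int_0^z (1-|a|^2)^\alpha(1-\bar a t)^{-2\alpha}\cdot\frac{a-t}{1-\bar a t}\,dt$ listed in Case~III.

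Next I would verify the hypothesis of the general theorem: it requires $D_{\varphi_i,u_i}^m:\mathcal{B}^\alpha\to H_v^\infty$ to be bounded for $i=1,2$, which here is precisely the standing assumption that $C_{\varphi_i}D:\mathcal{B}^\alpha\to H_v^\infty$ is bounded for $i=1,2$. With this in hand, the general theorem yields, after substituting the reductions above, that compactness of $C_{\varphi_1}D-C_{\varphi_2}D$ is equivalent to each of the conditions (ii), (iii), (iv). In particular (iv) comes from rewriting $\limsup_{n\to\infty}n^{\alpha+m-1}\|u_1\varphi_1^n-u_2\varphi_2^n\|_v$ as $\limsup_{n\to\infty}n^{\alpha}\|\varphi_1^n-\varphi_2^n\|_v$. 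Alternatively one could route through the essential-norm estimate of Section~3, specialized the same way, since compactness is equivalent to vanishing essential norm.

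I do not expect any genuine obstacle here: the mathematical content is entirely carried by the Section~3 theorems, and what remains is bookkeeping. The one point demanding a little care is the paper's convention of writing ``$id$'' for the constant weight $u\equiv 1$ (so that $C_{\varphi_i}D=D_{\varphi_i,1}^1$), together with the need to propagate the index shift $\alpha+m-1\mapsto\alpha$ uniformly through all three reformulated conditions and through the explicit formulas for $f_a$ and $g_a$.
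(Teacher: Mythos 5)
Your proposal is correct and is exactly the argument the paper intends: the corollary is stated without separate proof precisely because it is the specialization $u_1=u_2\equiv 1$, $m=1$ of the Section~3 compactness theorem, under which $\alpha+m-1=\alpha$, the multipliers become $\mathcal{T}_{\alpha}^{\varphi_i}(v)$, and the test functions reduce to the single integrals of Case~III. Your remark about the paper's loose use of ``$id$'' for the constant weight $u\equiv 1$ is a fair observation but does not affect the argument.
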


\textbf{Case IV}:  Let $m=1,$  it's trivial that $D_{\varphi_1,u_1}^m-D_{\varphi_2,u_2}^m=u_1 C_{\varphi_1}D-u_2 C_{\varphi_2}D.$  And in this case, we also use $f_a$ and $g_a$  to stand for the following test functions.
\begin{eqnarray*}&& f_a(z)=\int_0^z\frac{(1-|a|^2)^\alpha}{(1-\bar{a}t)^{2\alpha}}dt, \\&&
  g_a(z)=\int_0^z  \frac{(1-|a|^2)^\alpha}{(1-\bar{a}z)^{2\alpha}}\cdot \frac{a-t}{1-\bar{a}t} dt. \end{eqnarray*}
\begin{corollary} Let $0<\alpha<\infty$ and $v$ be a weight. Suppose $u_1, u_2\in H(\mathbb{D})$ and $\varphi_1, \varphi_2 \in S(\mathbb{D})$. Then the following statements are equivalent,

$(i)$ $u_1 C_{\varphi_1}D-u_2 C_{\varphi_2}D: \mathcal{B}^\alpha\rightarrow H_v^\infty$ is bounded;

$(ii)$ \begin{eqnarray*}&&\sup\limits_{z\in \mathbb{D}}|\mathcal{T}_{\alpha}^{\varphi_1}(vu_1)(z)|\rho(z)+\sup\limits_{z\in \mathbb{D}}\left|\mathcal{T}_{\alpha}^{\varphi_1}(vu_1)(z) - \mathcal{T}_{\alpha}^{\varphi_2}(vu_2)(z) \right| <\infty,\\&&  \sup\limits_{z\in \mathbb{D}}|\mathcal{T}_{\alpha}^{\varphi_2}(vu_2)(z)|\rho(z)+\sup\limits_{z\in \mathbb{D}}\left|\mathcal{T}_{\alpha}^{\varphi_1}(vu_1)(z) - \mathcal{T}_{\alpha}^{\varphi_2}(vu_2)(z) \right| <\infty;\end{eqnarray*}

$(iii)$ \begin{eqnarray*} \sup\limits_{a\in \mathbb{D}}\|(u_1 C_{\varphi_1}D-u_2 C_{\varphi_2}D)  f_a\|_v+\sup\limits_{a\in \mathbb{D}} \|(u_1 C_{\varphi_1}D-u_2 C_{\varphi_2}D) g_a\|_v <\infty; \end{eqnarray*}

$(iv)$ \begin{eqnarray*}\sup\limits_{n\in \mathbb{N}_0}  n^{\alpha}\|u_1\varphi_1^{n}-u_2\varphi_2^n\|_v<\infty.\end{eqnarray*}
\end{corollary}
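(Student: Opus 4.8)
The plan is to recognize this corollary as the special case $m=1$ of the main boundedness theorem of Section~2 and to transfer its four equivalent conditions after performing the appropriate substitutions. First I would record the operator identification: for $m=1$ the definition $(D_{\varphi,u}^m f)(z)=u(z)f^{(m)}(\varphi(z))$ gives $(D_{\varphi,u}^1 f)(z)=u(z)f'(\varphi(z))=u(z)(C_\varphi Df)(z)$, so that $D_{\varphi_1,u_1}^1-D_{\varphi_2,u_2}^1=u_1C_{\varphi_1}D-u_2C_{\varphi_2}D$. This matches statement $(i)$ of the corollary with statement $(i)$ of the theorem.

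Next I would carry out the parameter substitutions. Since $m=1$, the weight exponent $\alpha+m-1$ collapses to $\alpha$, so every occurrence of $\mathcal{T}_{\alpha+m-1}^{\varphi_i}(vu_i)$ in the theorem becomes $\mathcal{T}_{\alpha}^{\varphi_i}(vu_i)=vu_i/(1-|\varphi_i|^2)^{\alpha}$, reproducing the finiteness conditions in $(ii)$. Likewise the growth rate $n^{\alpha+m-1}$ in statement $(iv)$ becomes $n^{\alpha}$, yielding exactly $\sup_{n\in\mathbb{N}_0}n^{\alpha}\|u_1\varphi_1^n-u_2\varphi_2^n\|_v<\infty$.

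The only point requiring genuine verification is that the test functions $f_a,g_a$ degenerate correctly. For general $m$ these are defined by an $m$-fold iterated integral in \eqref{f_a}--\eqref{g_a}; setting $m=1$ reduces the nested integration to the single integrals $f_a(z)=\int_0^z (1-|a|^2)^\alpha(1-\bar a t)^{-2\alpha}\,dt$ and $g_a(z)=\int_0^z (1-|a|^2)^\alpha(1-\bar a t)^{-2\alpha}\tfrac{a-t}{1-\bar a t}\,dt$ recorded in the preamble to Case~IV, and one checks that their first derivatives agree with the formulas in \eqref{fgam} under the substitution $2\alpha+m-1=2\alpha$. With this confirmed, statement $(iii)$ of the corollary is literally statement $(iii)$ of the theorem.

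Having matched all four conditions, I would then invoke the equivalence $(i)\Leftrightarrow(ii)\Leftrightarrow(iii)\Leftrightarrow(iv)$ already proved in the theorem. I do not anticipate any real obstacle: the corollary is a direct specialization, and the substantive analytic content (the chain of implications carried through Lemmas~\ref{lem di}, \ref{lemma TFG} and \ref{lemma FGN}) has been established in full generality. The only care needed is the bookkeeping that ensures the $m=1$ test functions and the exponents $\alpha$ and $n^\alpha$ are substituted consistently throughout.
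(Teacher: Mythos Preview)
Your proposal is correct and matches the paper's approach exactly: the corollary is stated under Case~IV as the direct specialization $m=1$ of the main boundedness theorem, with no separate proof given, and your bookkeeping of the exponent $\alpha+m-1\mapsto\alpha$, the growth rate $n^{\alpha+m-1}\mapsto n^\alpha$, and the reduction of the iterated integrals defining $f_a,g_a$ to single integrals is precisely what the paper records in the preamble to that case.
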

\begin{corollary} Let $0<\alpha<\infty$, $v$ be a weight. Suppose $u_1, u_2\in H(\mathbb{D})$ and $\varphi_1, \varphi_2 \in S(\mathbb{D})$. Suppose that $u_i C_{\varphi_i}D: \mathcal{B}^\alpha\rightarrow H_v^\infty$ is bounded for $i=1,2$, then  the following statements are equivalent,   \begin{eqnarray*}&&(i)\; \mbox{$u_1 C_{\varphi_1}D-u_2 C_{\varphi_2}D: \mathcal{B}^\alpha\rightarrow H_v^\infty$ is compact;}\\ && (ii)\lim\limits_{r\rightarrow 1}\sup\limits_{|\varphi_1(z)|>r}\left|\mathcal{T}_{\alpha}^{\varphi_1}(vu_1)(z)\right| \rho(z) +\lim\limits_{r\rightarrow 1}\sup\limits_{|\varphi_2(z)|>r}\left|\mathcal{T}_{\alpha}^{\varphi_2}(vu_2)(z)\right| \rho(z) \nonumber\\&&+ \lim\limits_{r\rightarrow 1}\sup\limits_{\min\{|\varphi_1(z)|,|\varphi_2(z)|\}>r}\left|\mathcal{T}_{\alpha}^{\varphi_1}(vu_1)(z)-
\mathcal{T}_{\alpha}^{\varphi_2}
(vu_2)(z)\right|=0; \nonumber\\&&(iii)\; \limsup\limits_{|a|\rightarrow 1}\|(u_1 C_{\varphi_1}D-u_2 C_{\varphi_2}D) f_{a}\|_v+\limsup\limits_{|a|\rightarrow 1}\|(u_1 C_{\varphi_1}D-u_2 C_{\varphi_2}D) g_{a}\|_v =0;\nonumber\\&& (iv)\;\limsup\limits_{n\rightarrow \infty}  n^{\alpha}\|u_1 \varphi_1^{n} -u_2 \varphi_2^n \|_v=0.  \end{eqnarray*}\end{corollary}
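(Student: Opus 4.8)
The plan is to derive this corollary as the special case $m=1$ of the compactness results established in Section~3. First I would note that, directly from the definition, $D_{\varphi_i,u_i}^{1}f=u_i\cdot(f'\circ\varphi_i)=(u_iC_{\varphi_i}D)f$ for every $f\in H(\mathbb{D})$, so that $D_{\varphi_1,u_1}^{1}-D_{\varphi_2,u_2}^{1}=u_1C_{\varphi_1}D-u_2C_{\varphi_2}D$ as operators $\mathcal{B}^\alpha\to H_v^\infty$; in particular the standing hypothesis that $u_iC_{\varphi_i}D:\mathcal{B}^\alpha\to H_v^\infty$ be bounded for $i=1,2$ is exactly the hypothesis ``$D_{\varphi_i,u_i}^{m}:\mathcal{B}^\alpha\to H_v^\infty$ bounded for $i=1,2$'' required by the compactness theorem of Section~3.

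Next I would substitute $m=1$ throughout that theorem and read off the three conditions. The weighted quantities collapse to $\mathcal{T}_{\alpha+m-1}^{\varphi_i}(vu_i)=\mathcal{T}_{\alpha}^{\varphi_i}(vu_i)$, which yields statement $(ii)$. For the test functions, setting $m=1$ in \eqref{f_a} and \eqref{g_a} reduces the $m$-fold iterated integral to a single integral, so that $f_a(z)=\int_0^z\frac{(1-|a|^2)^\alpha}{(1-\bar a t)^{2\alpha}}\,dt$ and $g_a(z)=\int_0^z\frac{(1-|a|^2)^\alpha}{(1-\bar a t)^{2\alpha}}\cdot\frac{a-t}{1-\bar a t}\,dt$, precisely the functions declared for Case~IV; with these choices the second batch of quantities in the essential-norm estimate becomes the left-hand side of statement $(iii)$. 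Finally, the power exponent $\alpha+m-1$ appearing in $\sup_{n} n^{\alpha+m-1}\|u_1\varphi_1^n-u_2\varphi_2^n\|_v$ becomes $\alpha$, which is statement $(iv)$.

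It then remains only to invoke the machinery of Section~3: since $\|T\|_e=0$ if and only if $T$ is compact, and since the compactness theorem gives $\|u_1C_{\varphi_1}D-u_2C_{\varphi_2}D\|_{e,\mathcal{B}^\alpha\to H_v^\infty}$ comparable to each of the three $m=1$ quantities above, each of $(ii)$, $(iii)$, $(iv)$ vanishes exactly when the operator is compact, giving $(i)\Leftrightarrow(ii)\Leftrightarrow(iii)\Leftrightarrow(iv)$. I do not anticipate any genuine difficulty here: the only care needed is the bookkeeping of exponents --- checking that the $m=1$ reductions of $f_a$, $g_a$ and of $\mathcal{T}_{\alpha+m-1}$ really do match the objects named in Case~IV, and that Lemmas~\ref{lemma TFG}, \ref{lemma FGN} and \ref{lemma limFGN} are applied with $\alpha$ in place of $\alpha+m-1$ throughout.
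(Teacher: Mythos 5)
Your proposal is correct and matches the paper's approach exactly: the paper states this corollary without separate proof precisely because it is the $m=1$ specialization of the compactness theorem in Section~3, with $D_{\varphi_i,u_i}^{1}=u_iC_{\varphi_i}D$, the exponent $\alpha+m-1$ reducing to $\alpha$, and the test functions reducing to the single integrals named in Case~IV. Your bookkeeping of these reductions is accurate, so nothing further is needed.
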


\bibliographystyle{amsplain}

\end{document}